\newcommand{\im}{\mathrm{im }\,}
\newcommand{\cA}{\mathcal{A}}
\newcommand{\cB}{\mathcal{B}}
\newcommand{\cC}{\mathcal{C}}
\newcommand{\cG}{\mathcal{G}}
\newcommand{\cR}{\mathcal{R}}
\newcommand{\cS}{\mathcal{S}}
\newcommand{\bZ}{\mathbb{Z}}
\newtheorem{theorem}{Theorem}
\newtheorem{lemma}{Lemma}
\newtheorem{question}{Question}
\theoremstyle{definition}
\begin{document}
\title{Triangulations of the sphere, bitrades and abelian groups}
\author{Simon R. Blackburn\thanks{Department of Mathematics, Royal
    Holloway, University of London, Egham, Surrey TW20 0EX.}\,\, and
  Thomas A. McCourt\thanks{Heilbronn Institute for Mathematical
    Research, School of Mathematics, University of
    Bristol, University Walk, Bristol BS1 1TW.}}
\maketitle

\begin{abstract}
  Let $\cG$ be a triangulation of the sphere with vertex set $V$, such
  that the faces of the triangulation are properly coloured black and
  white. Motivated by applications in the theory of bitrades, Cavenagh
  and Wanless defined $\cA_W$ to be the abelian group generated by the
  set $V$, with relations $r+c+s=0$ for all white triangles with
  vertices $r$, $c$ and $s$. The group $\cA_B$ can be defined
  similarly, using black triangles.

  The paper shows that $\cA_W$ and $\cA_B$ are isomorphic, thus
  establishing the truth of a well-known conjecture of Cavenagh and
  Wanless. Connections are made between the structure of $\cA_W$ and
  the theory of asymmetric Laplacians of finite directed graphs, and
  weaker results for orientable surfaces of higher genus are
  given. The relevance of the group $\cA_W$ to the understanding of
  the embeddings of a partial latin square in an abelian group is also
  explained.
\end{abstract}

\section{Introduction}
\label{sec:introduction}

Let $\cG$ be a triangulation of the sphere, where each triangle
is coloured either black or white, and where no two triangles of the
same colour share a common edge, i.e. $\cG$ is a (properly) face
2-coloured triangulation. Let $W$ be the set of white triangles, and
let $B$ be the set of black triangles. We assume that $\cG$ is finite
(indeed, all objects in this paper are finite unless indicated otherwise).

Heawood~\cite{Heawood} showed that there is a (proper) vertex
$3$-colouring of $\cG$: see for example the proof of Theorem~4.11 in
Wilson~\cite[Page 38]{Wilson}. Let $R$, $C$ and $S$ be the colour
classes, so $V=R\cup C\cup S$ in this colouring and every triangle
contains exactly one vertex from each of $R$, $C$ and $S$.

Define an abelian group $\cA_W$ using the set of white triangles as
follows: $\cA_W$ is the abelian group with
generating set $V$, subject to the relations
$\{r+c+s=0:\{r,c,s\}\in W\}$. Define $\cA_B$ from the set of black
triangles in $\cG$ in a similar way: $\cA_B$ has the same
generating set $V$ as $\cA_W$, but relations $\{r+c+s=0:\{r,c,s\}\in
B\}$.

We prove the following theorem:

\begin{theorem}
\label{thm:isomorphism}
For any triangulation $\cG$ as above, $\cA_W\cong \cA_B$.
\end{theorem}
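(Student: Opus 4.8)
The plan is to realise $\cA_W$ and $\cA_B$ as cokernels of integer matrices and to compare their Smith normal forms. Writing $\partial_W\colon \bZ^{W}\to\bZ^{V}$ for the map sending a white triangle to the sum of its three vertices, we have $\cA_W=\mathrm{coker}(\partial_W)$, and likewise $\cA_B=\mathrm{coker}(\partial_B)$. First I would pin down the free rank. The two ``colour functionals'' $\chi_1,\chi_2\colon V\to\bZ$ that are constant on each of $R$, $C$, $S$ and satisfy $\chi_i(r)+\chi_i(c)+\chi_i(s)=0$ descend to both groups, since every triangle, black or white, meets each colour class exactly once; these give surjections of $\cA_W$ and $\cA_B$ onto a finite-index subgroup of $\bZ^2$. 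Conversely, using that the sphere is simply connected I would run a propagation argument through the vertex links to show that any $f\colon V\to\mathbb{Q}$ with $f(r)+f(c)+f(s)=0$ on all white (respectively black) faces must be constant on each colour class. Hence both $\cA_W$ and $\cA_B$ have free rank exactly $2$, and the theorem reduces to proving that their torsion subgroups $T_W$ and $T_B$ are isomorphic.

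Next I would set up the orientation dictionary between the two colours. Fixing an orientation of the sphere, each triangle inherits a cyclic order of its vertices; since adjacent faces traverse their common edge in opposite senses, a white triangle and a black triangle sharing an edge read that edge oppositely. Combined with Heawood's $3$-colouring this forces a global statement: every white triangle has colour-cyclic-order $(R,C,S)$ while every black triangle has the reverse order $(R,S,C)$, or vice versa, and this propagates from a single face to all others because the face-adjacency (dual) graph of $\cG$ is connected. Morally, $B$ is obtained from $W$ by reversing the surface orientation.

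The heart of the argument is to turn the torsion computation into the cokernel of a \emph{square} integer matrix for which the black/white swap becomes transposition. Using the rotation system at each vertex together with the $3$-colouring, I would eliminate one colour class, say $S$: in $\cA_W$ each relation $r+c+s=0$ lets me substitute $s=-(r+c)$, and the consistency conditions from the white faces around a fixed $S$-vertex, read off in cyclic order, produce one relation per intervening black face. Packaging these relations as the asymmetric Laplacian $L_W$ of the directed graph built from the colour-oriented links, one obtains $T_W\cong\mathrm{coker}(L_W)$. The key point, supplied by the orientation dictionary, is that performing the identical construction with the roles of black and white exchanged reverses every local orientation used to build $L_W$, and reversing all arc directions of a directed graph transposes its asymmetric Laplacian; thus $L_B=L_W^{\mathsf{T}}$ up to unimodular equivalence. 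I expect this to be the main obstacle: the bookkeeping needed to verify that the two reductions are \emph{exactly} transpose---in particular handling the linear dependencies coming from the vertex links and checking that no spurious invariant factors are introduced---is where the real work lies.

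Finally, an integer matrix and its transpose have the same Smith normal form, so $\mathrm{coker}(L_W)\cong\mathrm{coker}(L_W^{\mathsf{T}})=\mathrm{coker}(L_B)$, giving $T_W\cong T_B$. Combined with the common free rank $2$ established above, this yields $\cA_W\cong\cA_B$, proving the theorem.
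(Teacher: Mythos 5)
Your strategy is in essence the paper's own: eliminate one colour class using the triangle relations, package the surviving relations as the asymmetric Laplacian of a digraph built from the coloured rotations, observe that swapping black and white transposes this matrix, and conclude via invariance of the Smith normal form under transposition. But the step you yourself flag as ``where the real work lies'' is a genuine gap, and it is exactly the point where the sphere must be used. After substituting $s=-(r+c)$ you have a presentation of $\cA_W$ whose relations are differences of edge-elements $\tilde{x}=c+s$ (in the paper's notation, with the roles of $R$ and $S$ swapped); to conclude that the torsion is computed by your Laplacian $L_W$ you must show that the relations read off the vertex links generate \emph{all} dependencies among the $\tilde{x}$ --- equivalently, that in the bipartite graph on the two remaining colour classes the integral flow space (the kernel of the natural map $F(E)\to\langle\tilde{E}\rangle$) is generated by the face boundaries. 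This is precisely Lemma~\ref{lem:cycle_lemma} of the paper, and its proof is where planarity enters: every simple cycle in a sphere-embedded graph is a sum of face-boundary flows. Without it you only obtain that $\cA_W$ is a \emph{quotient} of $\bZ\oplus\mathrm{coker}(L_W)$, which is all that survives on higher-genus surfaces (Theorem~\ref{thm:highergenus}); the vertex-$3$-colourable torus example of Figure~\ref{fig:non_isomorphism_example}, with $\cA_W\cong\bZ\oplus\bZ\oplus\bZ_6$ and $\cA_B\cong\bZ\oplus\bZ\oplus\bZ_3$, shows the step cannot be finessed, since your orientation dictionary and transposition argument go through verbatim there, yet the conclusion fails.

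Three smaller points. First, ``reversing all arc directions of a directed graph transposes its asymmetric Laplacian'' is false in general: reversal transposes the adjacency part but exchanges in- and out-degrees on the diagonal, so it transposes the Laplacian only for Eulerian digraphs. That Eulerianness holds here is the paper's computation $d_i=d_i'$ (hence $t_{ii}=t_{ii}'$), which follows from the alternation of colours in the rotation at each vertex; you should prove this rather than bury it in ``up to unimodular equivalence''. Second, $\mathrm{coker}(L_W)$ is not the torsion subgroup $T_W$: the rows of $L_W$ sum to zero, so the cokernel has free rank at least one (it is $\bZ\oplus\cC_W$, by Theorem~\ref{thm:group_order}); this is harmless for the final comparison, since equal Smith normal forms identify the whole cokernels, but the identification as written is wrong. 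Third, your separate rank-$2$ computation is both unproven and unnecessary: the ``propagation'' claim that rational solutions are constant on colour classes is equivalent to the finiteness of $\cC_W$, which the paper derives from strong connectivity of the digraph $D$ by another sphere-specific border argument, so it is not a routine preliminary; and once the planarity lemma is in place, $\cA_W\cong\bZ\oplus\mathrm{coker}(T)$ and $\cA_B\cong\bZ\oplus\mathrm{coker}(T^{\mathsf{T}})$ yield the theorem in one stroke, with no need to split off the free part first.
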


This establishes the truth of a well-known conjecture by Cavenagh and
Wanless~\cite[Conjecture~1]{CavenaghWanless}; see the 17th edition of the Kourovka
Notebook~\cite[Problem~17.35]{Kourovka17} or the list of problems
from the 21st British Combinatorial Conference~\cite[Problem~524,
BCC22.21]{Cameron}. The motivation for this problem came from a
conjecture on latin bitrades due to Cavenagh and
Dr\'apal~\cite{CavenaghDrapal}; we provide more details in
Section~\ref{sec:motivation}. (Incidentally, the origins of this
problem in the theory of latin trades explains our choice of notation for
the colour classes of our triangulation: the triangle $\{r,c,s\}$
corresponds to an entry of symbol $s$ in row $r$ and column $c$ of a
partial latin square.)

Let $G$ be the graph embedded in the sphere to form $\cG$ and let $V$
be the vertex set of $G$.  For an edge $e$ of $G$, we write $\psi(e)$
for the set of its two end vertices. Every triangle is uniquely
determined by its colour together with the set of three distinct
incident vertices. Note that it is possible for there to be two
triangles of different colours with the same associated triple of
vertices; the simplest example is when we have two faces that are
identified with opposite hemispheres, with a triangle on the equator
between them, and Figure~\ref{fig:nasty_triangulation} gives a more
complicated example. This example also shows that an edge is not
necessarily determined by its end vertices (so $G$ is not necessarily
simple); moreover, the rotation at a vertex (in other words, the boundary of
the union of all triangles containing a vertex) is not necessarily a
simple cycle.  Triangulations in which the rotation at every vertex is
a simple cycle are sometimes known as \emph{piecewise-linear
  triangulations}.  Since all triangulations arising from latin
bitrades are piecewise linear, our Theorem~\ref{thm:isomorphism} is,
in fact, a little more general than the application to latin bitrades needs.

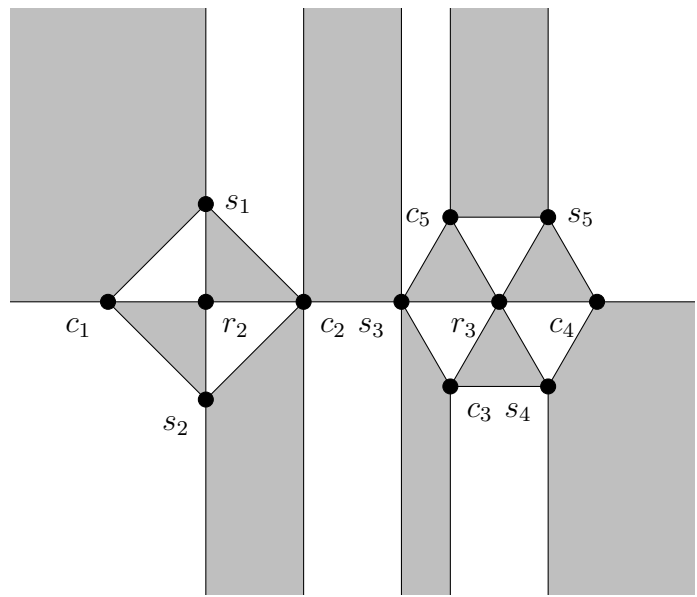
\begin{figure}
\begin{center}
\begin{tikzpicture}[fill=gray!50, scale=1.3,
vertex/.style={circle,inner sep=2,fill=black,draw}]

\clip (-5,-3) rectangle (2,3);

\coordinate (r3) at (0,0);
\coordinate (c4) at (0:1);
\coordinate (s5) at (60:1);
\coordinate (c5) at (120:1);
\coordinate (s3) at (180:1);
\coordinate (c3) at (240:1);
\coordinate (s4) at (300:1);

\coordinate (c2) at (-2,0);
\coordinate (r2) at (-3,0);
\coordinate (c1) at (-4,0);
\coordinate (s1) at (-3,1);
\coordinate (s2) at (-3,-1);

\filldraw (r3) -- (s3) -- (c5) -- cycle;
\filldraw (r3) -- (s5) -- (c4) -- cycle;
\filldraw (r3) -- (c3) -- (s4) -- cycle;
\filldraw (r2) -- (s1) -- (c2) -- cycle;
\filldraw (r2) -- (c1) -- (s2) -- cycle;

\filldraw (c2) -- (s3) -- +(0,5) -- (-2,5) -- cycle; 
\filldraw (c5) -- (s5) -- ++(0,3) -- ++(-1,0) -- cycle;

\filldraw (s3) -- (c3) -- ++(0,-4) -- ++(120:1) -- cycle;
\filldraw (c2) -- (s2) -- ++(0,-4) -- ++(1,1) -- cycle;

\filldraw (c1) -- (s1)-- ++(0,6) -- ++(-6,0) -- ++(0,-7) -- cycle;
\filldraw (c4) -- (s4) -- ++(0,-4) -- ++(60:1) -- ++(4,0) -- ++(0,4) -- cycle;

\node at (r3) [vertex,label=south west:$r_3\,$]{};
\node at (c5) [vertex,label=west:$c_5$]{};
\node at (s5) [vertex,label=east:$s_5$]{};
\node at (c4) [vertex,label=south west:$c_4\,$]{};
\node at (s4) [vertex,label=south west:$s_4$]{};
\node at (c3) [vertex,label=south east:$c_3$]{};
\node at (s3) [vertex,label=south west:$s_3$]{};

\node at (r2) [vertex,label=south east:$r_2$]{};
\node at (c2) [vertex,label=south east:$c_2$]{};
\node at (s2) [vertex,label=south west:$s_2$]{};
\node at (c1) [vertex,label=south west:$c_1$]{};
\node at (s1) [vertex,label=east:$s_1$]{};
\end{tikzpicture}
\end{center}
\caption{A triangulation. One node ($r_1$) has been placed at infinity.}
\label{fig:nasty_triangulation}
\end{figure}

The structure of the rest of this paper is as follows. In
Section~\ref{sec:motivation}, we discuss how our problem arises in the
study of spherical latin bitrades. The results and terminology in this
section are not needed to understand the main results of this paper,
but motivate these results and are needed to understand some of our comments on
possible future directions at the end of this paper. In
Section~\ref{sec:preliminary}, we prove a preliminary lemma on
connected planar bipartite graphs. Theorem~\ref{thm:isomorphism} is
proved in Section~\ref{sec:proof_of_theorem}. We establish more
information about the structure of the group $\cA_W$ in
Section~\ref{sec:torsion}, connecting this group with the theory of
asymmetric Laplacians on directed graphs. In particular, we use this
theory to provide examples of groups $\cA_W$ of maximal possible rank,
and of exponential order. Finally, in Section~\ref{sec:comments}, we
give examples to show that Theorem~\ref{thm:isomorphism} cannot hold
for general surfaces, and comment on the form of $\cA_W$ and $\cA_B$
in this more general case. We also provide a selection of open
problems in the area.

\section{Background and motivation}
\label{sec:motivation}

Let $G$ be a graph (not necessarily simple) with $v$ vertices. If the
edges of $G$ can be partitioned into isomorphic copies of $K_3$, then
such a partition is called a \emph{partial triple system of order
  $v$}; moreover if the maximum number of edges in $G$ that have the
same end points is $\lambda$, then the partial triple system is said
to have index $\lambda$, we write PTS($v,\lambda$).

Now consider a partial triple system, PTS($v,\lambda$) say, that is
vertex 3-colourable and let $n$ be the size of the largest colour
class. The system, together with a vertex 3-colouring, is called a
\emph{partial transveral design of block size three and index
  $\lambda$}; we write PTD$_\lambda(3,n)$. Moreover, if each pair of
vertices from distinct colour classes occur in precisely $\lambda$
triples (hence, $v\equiv 0\,(\text{mod } 3)$ and the graph $G$ partitioned
into copies of $K_3$ is the $\lambda$-fold complete tripartite graph
$\lambda K_{v/3,v/3,v/3}$), then it is said to be a \emph{transveral
  design of block size three and index $\lambda$}.  Finally, if
$\lambda=1$ we say that the (partial) transversal design is called a
\emph{(partial) latin square}. The \emph{order} of a partial latin
square is the size of the largest colour class (in other words, $n$). See Part
III of \cite{CRC} for a survey of results on latin squares.

Let $P$ be a partial latin square, of order $n$, with vertex colouring
represented by a partition of the vertices into the sets $R$ (the
rows), $C$ (the columns) and $S$ (the symbols). Note that $R$, $C$ and
$S$ are by definition pairwise disjoint. We can relabel the
vertices in $R$ (respectively, $C$ and $S$) as $r_i$ (respectively,
$c_j$ and $s_k$), indexed by a set of size $|R|$ (respectively, $|C|$
and $|S|$). Then any triple in $P$ is of the form $\{r_i, c_j,s_k\}$,
and symbol $k$ can be thought of as occurring in row $i$ and column
$j$ of an $n\times n$ array. Thus a partial latin square has at most
one occurence of each symbol in any row or column, and
each cell of the array has at most one symbol assigned to it. (If we
replace `at most one' in the preceeding sentence with `exactly one' we
have a latin square.)

A \emph{latin bitrade} is a pair $(W,B)$ of nonempty partial latin
squares such that for each triple $\{r_i,c_j,s_k\}\in W$ (respectively
$B$) there exist unique $r_{i'}\neq r_i$, $c_{j'}\neq c_j$ and
$s_{k'}\neq s_k$ such that
\[
\{\{r_{i'},c_j,s_k\},\{r_i,c_{j'},s_k\},\{r_i,c_j,s_{k'}\}\}\subset
B\text{ (respectively }W\text{).}
\]
Each of $W$ and $B$ is called a \emph{latin trade}, and we say that
$W$ is the \emph{mate} of $B$ (and vice versa). The \emph{size} of the
latin bitrade is $|W|$ (or $|B|$). It is possible for some trade, $W$
say, to occur in many distinct bitrades.

Although latin trades have been studied implicitly in many different
papers (generally when considering the difference between or the
intersection of two latin squares of the same order) the first
explicit appearance of latin trades in the literature is in a paper by
Dr\'apal and Kepka \cite{DrapalKepka}, where they are referred to as
\emph{exchangeable partial groupoids}. Subsequently to this they have
been extensively studied (see \cite{CavenaghSurvey} for a survey of
results up until 2008).

In this paper we are motivated by the topological properties of latin
trades.  Let $\cG$ be a 2-cell embedding of a graph $G$ in a surface
$\cS$. If every face of $G$ is a 3-cycle, then we say that $\cG$ is a
\emph{triangulation} of $\cS$.  Now, consider a face 2-coloured
triangulation of some surface, then each colour class corresponds to a
partial triple system. The two partial triple systems $W$ and $B$ are
said to be \emph{biembedded} in the surface. If the triple systems
are disjoint partial latin squares, then $(W,B)$ forms a latin
bitrade, and we say that there is a biembedding of the bitrade $(W,B)$
into $\cS$. In particular when the biembedding is in the sphere we say
that $(W,B)$ is a \emph{spherical latin bitrade}. We say that a latin
bitrade is \emph{decomposable} if there exist proper subsets
$W'\subseteq W$ and $B'\subseteq B$ such that $(W',B')$ is a
bitrade. If $(W,B)$ is an indecomposable latin bitrade of size $t$,
then $|R|+|C|+|S|\leq t+2$, with equality exactly when $(W,B)$ is
spherical. Thus the property of being spherical can be motivated
purely combinatorially. In recent years biembeddings of bitrades have
been extensively studied, see \cite{Drapal}, \cite{Cavenagh}, \cite
{Drapal2}, \cite{Drapal3} and \cite{CavenaghWanless}.

Two partial latin squares are said to be \emph{isotopic} if they
are equal up to relabelling of their sets of rows, columns and
symbols.  A partial latin square $P$ is said to \emph{embed in an
  abelian group $A$} if there exist injective maps $f_1:R\rightarrow A$,
$f_2:C\rightarrow A$ and $f_3:S\rightarrow A$ such that
$f_1(r)+f_2(c)=f_3(s)$ for all $\{r,c,s\}\in P$. In other words, $P$
embeds in an abelian group~$A$ if and only if it is isotopic to a
partial latin subsquare $P'$ of the multiplication table of $A$. (Of
course, this definition can be easily generalised to nonabelian
groups, indeed to general quasigroups.)

\begin{lemma}
\label{lem:ab_embedding}
Let $V=R\cup C\cup S$, where the sets $R$, $C$ and $S$ are disjoint. 
A partial latin square $P$ embeds in an abelian group $A$ if and only
if there exists a function $f:V\rightarrow A$ that is injective when
restricted to each of $R$, $C$ and $S$ and is such that
$f(r)+f(c)+f(s)=0$ for all $\{r,c,s\}\in P$.
\end{lemma}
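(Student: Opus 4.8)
The plan is to prove both directions of the biconditional directly from the definitions. This lemma relates two notions of "embedding a partial latin square into an abelian group": the original definition via three injective maps $f_1, f_2, f_3$ satisfying $f_1(r) + f_2(c) = f_3(s)$, and the reformulated single-function version via $f\colon V \to A$ with $f(r) + f(c) + f(s) = 0$. The essential idea is that the two formulations differ only by a sign on the symbol class, so the translation between them should amount to negating $f_3$.

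For the forward direction, suppose $P$ embeds in $A$ via injective maps $f_1\colon R \to A$, $f_2\colon C \to A$ and $f_3\colon S \to A$ with $f_1(r) + f_2(c) = f_3(s)$ for all $\{r,c,s\} \in P$. I would define $f\colon V \to A$ by setting $f|_R = f_1$, $f|_C = f_2$ and $f|_S = -f_3$ (that is, $f(s) = -f_3(s)$ for $s \in S$). Since $R$, $C$, $S$ are disjoint this is well-defined. Each piece is injective because $f_1$, $f_2$ are injective and $s \mapsto -f_3(s)$ is injective whenever $f_3$ is (negation is an automorphism of $A$), so $f$ is injective on each colour class. Finally, the relation $f_1(r) + f_2(c) = f_3(s)$ rearranges to $f(r) + f(c) + f(s) = f_1(r) + f_2(c) - f_3(s) = 0$, as required.

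For the reverse direction, suppose we are given $f\colon V \to A$, injective on each of $R$, $C$, $S$, with $f(r) + f(c) + f(s) = 0$ for all $\{r,c,s\} \in P$. I would set $f_1 = f|_R$, $f_2 = f|_C$ and $f_3 = -f|_S$. These are injective by the same reasoning as before, and $f(r) + f(c) + f(s) = 0$ rearranges to $f_1(r) + f_2(c) = -f(s) = f_3(s)$, recovering the original embedding condition. The two constructions are mutually inverse, so this completes the equivalence.

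This lemma is essentially a routine bookkeeping argument, and I do not expect a genuine obstacle: the only subtlety worth stating carefully is that negation is a group automorphism, which is what preserves injectivity of the symbol map under the sign change, and that disjointness of $R$, $C$, $S$ is what lets us glue the three partial maps into a single well-defined $f$ on $V$. The content of the lemma is really just the observation that writing the relation in the symmetric form $f(r)+f(c)+f(s)=0$ is equivalent to the asymmetric form $f_1(r)+f_2(c)=f_3(s)$, which sets up the connection to the group $\cA_W$ defined via exactly such symmetric relations.
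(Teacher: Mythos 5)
Your proof is correct and follows exactly the paper's approach: the paper's own (one-line) proof defines $f$ by $f|_R=f_1$, $f|_C=f_2$, $f|_S=-f_3$, precisely your forward-direction construction, with the reverse direction being the same sign change run backwards. You have simply spelled out both directions and the injectivity bookkeeping that the paper leaves implicit.
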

\begin{proof}
  Using the notation in the definition of an embedding above, define
  $f$ by $f|_{R}=f_1$, $f|_C=f_2$ and $f|_S=-f_3$.
\end{proof}

From now on, we refer to a function $f$ satisfying the conditions in
Lemma~\ref{lem:ab_embedding} as an \emph{embedding} of $P$ in $A$. If
the image of $f$ does not generate $A$, clearly $P$ may be embedded in
a subgroup of $A$.  We say that an abelian group $A$ is a
\emph{minimal abelian representation} for the partial latin square $P$
if $P$ embeds in $A$, and the image of $f$ generates $A$ for all
embeddings $f$ of $P$ in $A$.

Note that the notion of an embedding of a partial transveral design
$P$ in an abelian group only makes sense when the design is in fact a
partial latin square, since the definition of an embedding implies
that a triple in $P$ is determined uniquely by any two of its
components.

In \cite{CavenaghDrapal} Cavenagh and Dr\'apal asked: ``Can the
individual trades in any biembedding of a latin bitrade be embedded
into the operation table of an abelian group? If this is not true in
general is it true for spherical latin bitrades?''  This question for
spherical latin bitrades was resolved positively by Canvenagh and
Wanless in \cite{CavenaghWanless} (see also Dr\'apal, H\"am\"al\"ainen
and Kala~\cite{DHK}), who also gave examples of
biembeddings of latin bitrades $(W,B)$ into surfaces of higher genus
such that $W$ does not embed into any group. Their work motivated
their conjecture that $\cA_W\cong\cA_B$ for spherical latin bitrades
$(W,B)$, where $\cA_W$ and $\cA_B$ are as defined in the
introduction. Note that the definition of the group $\cA_W$ makes
sense for any partial transversal design $W$. However, the group
$\cA_W$ is particularly relevant once we know that $W$ is a partial
latin square that embeds in some abelian group, because of the
`universal' property of $\cA_W$ for embeddings of $W$, given by the
following theorem.
(A more general statement, for both abelian and nonabelian groups, 
was proved in \cite{newDK}.)

\begin{theorem}
\label{thm:universal_ab_theorem}
Let $W$ be a partial latin square, and suppose there exists an
embedding $f:V\rightarrow A$, where $A$ is an abelian
group. Then the natural map $g:V\rightarrow \cA_W$ is
an embedding of $W$ in $\cA_W$, and there is a homomorphism
$h:\cA_W\rightarrow  A$ such that $f=hg$.
\end{theorem}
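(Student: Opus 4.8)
The plan is to exploit the fact that $\cA_W$ is, by construction, the abelian group presented by the generating set $V$ subject to the relations $\{r+c+s=0:\{r,c,s\}\in W\}$, and therefore enjoys the usual universal property of a group given by generators and relations. Here the natural map $g:V\to\cA_W$ sends each vertex to its image in the quotient. First I would record this universal property explicitly: if $A'$ is any abelian group and $\phi:V\to A'$ is a set map satisfying $\phi(r)+\phi(c)+\phi(s)=0$ for every triple $\{r,c,s\}\in W$, then $\phi$ extends to a unique group homomorphism $\bar\phi:\cA_W\to A'$ with $\bar\phi g=\phi$. This is standard: one extends $\phi$ first to a homomorphism from the free abelian group on $V$, and then observes that the hypothesis on $\phi$ forces this homomorphism to vanish on the subgroup generated by the relators, so that it descends to the quotient $\cA_W$.

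Applying this universal property to the given embedding $f:V\to A$, which by the definition of an embedding satisfies $f(r)+f(c)+f(s)=0$ for all triples $\{r,c,s\}\in W$, immediately yields a homomorphism $h:\cA_W\to A$ with $hg=f$. This establishes the final assertion of the theorem, namely the factorisation $f=hg$.

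It then remains to verify that $g$ is itself an embedding of $W$ in $\cA_W$, and this is the point at which the hypothesis that $W$ embeds \emph{somewhere} does genuine work. The relations $g(r)+g(c)+g(s)=0$ hold by the very definition of $\cA_W$, so all that is needed is injectivity of $g$ on each of $R$, $C$ and $S$. Suppose $v$ and $v'$ lie in the same colour class with $g(v)=g(v')$; applying $h$ gives $f(v)=h(g(v))=h(g(v'))=f(v')$, and since $f$ is injective on each class, $v=v'$. I expect this last step to be the only part requiring care, not because it is technically difficult but because it is the crux of the statement: the relations defining $\cA_W$ do not on their own guarantee that $g$ separates the vertices of a colour class, and indeed for a partial transversal design admitting no abelian embedding the map $g$ may collapse distinct vertices. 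The existence of $f$ is precisely the witness that forbids such a collapse, transmitted to $g$ through the factorisation $f=hg$.
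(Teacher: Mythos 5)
Your proof is correct and takes essentially the same route as the paper's: extend $f$ to a homomorphism from the free abelian group $F(V)$, observe that the relators $r+c+s$ for $\{r,c,s\}\in W$ lie in its kernel so that it descends to $h:\cA_W\rightarrow A$ with $f=hg$, and then deduce injectivity of $g$ on each of $R$, $C$ and $S$ from the injectivity of $f$ on each class. Your closing observation --- that the existence of $f$ is precisely what prevents $g$ from collapsing vertices within a colour class --- is exactly the point the paper's final sentence relies on, just spelled out more explicitly.
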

\begin{proof}
  Let $F(V)$ be the free abelian group on the set $V$. There is a
  unique homomorphism $\zeta:F(V)\rightarrow A$ such that
  $\zeta|_V=f$. The elements $r+c+s$ for $\{r,c,s\}\in W$ lie in the
  kernel of $\zeta$, since $f$ is an embedding of $W$ into $A$. Hence
  $\zeta$ induces a homomorphism $h:\cA_W\rightarrow A$, and it is
  easy to see that $f=hg$. This implies that $g$ is injective
  when restricted to $R$, $C$ or $S$, and so $g$ is an embedding, as
  required.
\end{proof}
In particular, Theorem~\ref{thm:universal_ab_theorem} shows that any
minimal abelian representation of $W$ is a quotient of $\cA_W$. In
fact, more is true. It is not hard to see that there is a homomorphism
$\nu:\cA_W\rightarrow \bZ\oplus \bZ$ such that $\nu(r)=(1,0)$,
$\nu(c)=(0,1)$ and $\nu(s)=(-1,-1)$ for $r\in R$, $c\in C$ and $s\in
S$. Defining $\cC_W$ to be the kernel of $\nu$, we see that
$\cA_W\cong \bZ\oplus \bZ\oplus \cC_W$. (Here we are using the facts
that~$\nu$ is surjective and $\bZ\oplus \bZ$ is free abelian; see
Rotman~\cite[Corollary~10.21]{Rotman}, for example.) 
Both Dr\'apal et al~\cite{DHK} and 
Cavenagh and
Wanless~\cite{CavenaghWanless} show that $\cC_W$ is finite when
$(W,B)$ is a spherical bitrade, so $\cC_W$ is the torsion subgroup of $\cA_W$. (They consider a group that is
isomorphic to $\cC_W$, which they call $\cA_W^*$.)  We will reprove
this result in Section~\ref{sec:torsion}.

It is worth noting as an aside that Cavenagh and Wanless have
retracted their proof of the statement~\cite[Theorem~7]{CavenaghWanless}
that the torsion subgroups of $\cA_W$ and $\cA_B$ have equal
order. (Their proof of their Theorem~7 assumes, in their notation, that
pairing each white face $f$ with the black face not adjacant to the
vertex $v_f$ produces a bijection between sets of black and white
faces. However, there are simple examples where this is not the case.)
Of course, their Theorem~7 still holds, since it is a corollary of our
Theorem~\ref{thm:isomorphism}.

The following theorem is not stated explicitly by either Cavenagh and Wanless or Dr\'apal et al,
but can easily be proved from their work.
(Although the result can also be obtained as a direct corollary of
earlier work by Dr\'apal and Kepka, Lemma 3.3 in \cite{newDK}, we
believe that the proof we give is more accessible.) 
\begin{theorem}
\label{thm:small_universe}
Let $W$ be a partial latin square that can be embedded in an abelian
group. Define the group $\cC_W$ as above. Then any minimal abelian
representation $A$ of $W$ is a quotient of $\cC_W$. 
\end{theorem}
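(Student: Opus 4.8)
The plan is to combine the universal property of $\cA_W$ (Theorem~\ref{thm:universal_ab_theorem}) with the defining property of a minimal representation. Fix any embedding $f:V\to A$; this exists because $W$ embeds in $A$. By Theorem~\ref{thm:universal_ab_theorem} there is a homomorphism $h:\cA_W\to A$ with $f=hg$, where $g:V\to\cA_W$ is the natural map, and since $\im f$ generates $A$ the map $h$ is surjective. Using $\cA_W\cong\bZ\oplus\bZ\oplus\cC_W$, it therefore suffices to prove the stronger statement $h(\cC_W)=A$: for then $A\cong\cC_W/\ker(h|_{\cC_W})$ is a quotient of $\cC_W$, as required. The obstacle to overcome is exactly that the rank-$2$ free part of $\cA_W$ might, a priori, be needed to generate $A$ under $h$.

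First I would describe $h(\cC_W)$ explicitly. Fixing $r_0\in R$ and $c_0\in C$ gives a section of $\nu$, so that $\cA_W=\langle r_0\rangle\oplus\langle c_0\rangle\oplus\cC_W$ with associated projection $\pi:\cA_W\to\cC_W$ satisfying $\pi(r)=r-r_0$, $\pi(c)=c-c_0$ and $\pi(s)=s+r_0+c_0$ for $r\in R$, $c\in C$, $s\in S$. Whenever $\{r,c,s\}\in W$ the relation $s=-r-c$ yields $\pi(s)=-(r-r_0)-(c-c_0)$, and since every symbol occurs in some triple it follows that $\cC_W$ is generated by the within-class differences $v-v'$ (with $v,v'$ in a common class). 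Applying $h$ then identifies $h(\cC_W)$ with the subgroup $A_0\leq A$ generated by all differences $f(v)-f(v')$ with $v,v'$ in the same class. So the goal becomes $A_0=A$.

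The step I expect to be the main obstacle is proving $A_0=A$, which I would settle by a normalisation argument feeding into minimality. For $a,b\in A$, define $f_{a,b}$ by $f_{a,b}(r)=f(r)+a$, $f_{a,b}(c)=f(c)+b$ and $f_{a,b}(s)=f(s)-a-b$; this is again an embedding, being a translate of $f$ on each class (hence injective on each) and preserving every relation since $a+b+(-a-b)=0$. Choosing $a=-f(r_0)$ and $b=-f(c_0)$ makes $f_{a,b}(r)=f(r)-f(r_0)$ and $f_{a,b}(c)=f(c)-f(c_0)$ within-class differences, so they lie in $A_0$; and for $s\in S$, picking a triple $\{r,c,s\}\in W$ gives $f_{a,b}(s)=f(s)+f(r_0)+f(c_0)=-(f(r)-f(r_0))-(f(c)-f(c_0))\in A_0$. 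Hence $f_{a,b}$ is an embedding with image inside $A_0$, so by minimality $A=\langle\im f_{a,b}\rangle\subseteq A_0\subseteq A$ and $A_0=A$. The only real subtlety is exhibiting a single translation that drives all three colour classes into $A_0$ simultaneously; once that is found, minimality closes the argument and shows $A$ is a quotient of $\cC_W$.
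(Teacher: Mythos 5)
Your proposal is correct and is essentially the paper's own argument: both obtain the surjection $h:\cA_W\rightarrow A$ from Theorem~\ref{thm:universal_ab_theorem} and then feed a translated embedding (your $f_{a,b}$, the paper's $f'$) into the minimality hypothesis to show that the free rank-$2$ part of $\cA_W$ is redundant. The difference is only bookkeeping: you prove $h(\cC_W)=A$ directly, by projecting along $\langle r_0\rangle\oplus\langle c_0\rangle$ and identifying $h(\cC_W)$ with the subgroup generated by within-class differences, whereas the paper passes to $Q=\cA_W/\ker h$, uses the translated embedding to show $\nu(\ker h)=\im\nu$, and then computes $Q\cong \cC_W/(\ker h\cap \cC_W)$ --- equivalent conclusions reached by the same translation-plus-minimality trick.
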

\begin{proof}
  Let $f:V\rightarrow A$ be an embedding of $W$ in $A$, and let
  $g:V\rightarrow \cA_W$ and $h:\cA_W\rightarrow A$ be defined as in
  Theorem~\ref{thm:universal_ab_theorem}. Now $f(V)$ generates $A$,
  since $A$ is a minimal abelian representation of $W$. Since $h$ is a
  homomorphism and $h(g(V))=f(V)$, we see that $h$ is onto and so (by
  the first isomorphism theorem for groups) $A\cong Q$, where
  $Q=\cA_W/K$ for the subgroup $K:=\ker h$ of $\cA_W$. Without loss of
  generality, we may identify $A$ with $Q$.

Let $\overline{\nu}:Q\rightarrow (\bZ\oplus \bZ)/\nu(K)$ be the
homomorphism induced from the map $\nu$ defined above. Let $r'\in R$,
$c'\in C$ and $s'\in S$ be fixed. Then
$\overline{\nu}(f(R))=(1,0)+\nu(K)=\overline{\nu}(f(r'))$, and similarly
$\overline{\nu}(f(C))=\overline{\nu}(f(c'))$ and
$\overline{\nu}(S)=\overline{\nu}(f(s'))$. Define a new embedding $f'$ of $W$
into $Q$ by
\[
f'(v) =\left\{\begin{array}{cl}
f(v)-f(r')\text{ if }v\in R,\\
f(v)-f(c')\text{ if }v\in C,\\
f(v)-f(s')\text{ if }v\in S.
\end{array}
\right.
\]
Then $f'$ is an embedding of $W$ in $Q$, and
$\overline{\nu}(f'(V))=0$. But then $f'(V)\subseteq \ker
\overline{\nu}$ and so, since $Q$ is a minimal abelian representation of $W$, we
must have $\ker \overline{\nu}=Q$. Thus $\overline{\nu}$ is the all
zero map, and so $\nu(K)=\im\nu$. In particular, there exist
$k_1,k_2\in K$ such that $\nu(k_1)=(1,0)$ and $\nu(k_2)=(0,1)$.
We have that $\cA_W=\langle k_1\rangle \oplus \langle k_2\rangle
\oplus \cC_W$. Setting $N=K\cap \cC_W$, we find that $K=\langle
k_1,k_2,N\rangle$ and so 
\[
Q=\frac{\langle k_1\rangle \oplus \langle k_2\rangle
\oplus \cC_W}{\langle
k_1,k_2,N\rangle}\cong \frac{\cC_W}{N}.
\qedhere
\]
\end{proof}

\section{Connected bipartite planar graphs}
\label{sec:preliminary}

This section proves a key lemma that we need for the proof of
Theorem~\ref{thm:isomorphism} in Section~\ref{sec:proof_of_theorem}.

Let $C$ and $S$ be non-empty finite sets. Let $\Gamma=(C\cup S,E)$ be
a connected bipartite planar graph on the classes $C$ and $S$. Fix an
embedding of $\Gamma$ in the sphere. Suppose there are $k$ faces
$\cR_i$ in this embedding. (When we apply the material in this
section, $C$ and $S$ will be two of the colour classes of the vertex
$3$-colouring of our triangulation referred to in the introduction. If
we write the third colour class as $R:=\{r_1,r_2,\ldots,r_k\}$, then
$\cR_i$ will be the region corresponding to the union of those
triangles, whether black or white, containing $r_i$. So the boundary walk of $\cR_i$ is the rotation at $r_i$.) For each face
$\cR_i$, let $e_{i1},e_{i2},\ldots ,e_{i\ell_i}$ be the edges in the
boundary of $\cR_i$, taken clockwise. Let $u\in C\cup S$. For a set
$X$, we write $F(X)$ for the free abelian group on the set $X$. For an
edge $z\in E$ with end-vertex set $\psi(z)=\{s,c\}$, we define $\tilde{z}=s+c\in
F(C\cup S)$ and $\tilde{E}=\{\tilde{z}:z\in E\}$. This section proves
the following lemma.

\begin{lemma}
\label{lem:cycle_lemma}
Let $\Gamma$ be a connected bipartite planar graph as above. Then $F(C\cup S)=\langle u\rangle \oplus
\langle \tilde{E}\rangle$. We have that $\langle u\rangle\cong
\bZ$. Moreover, the natural map from $F(E)$ to $\langle
\tilde{E}\rangle$ induces the following isomorphism:
\[
\langle \tilde{E}\rangle\cong  F(E)/
  \left\langle\sum_{a=1}^{\ell_i}(-1)^ae_{ia}:i\in\{1,2,\ldots,k\}\right\rangle.
\]
\end{lemma}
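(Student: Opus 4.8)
The plan is to treat the two assertions separately: the direct-sum decomposition is elementary and uses only connectedness, whereas the presentation of $\langle\tilde{E}\rangle$ rests on the planarity of $\Gamma$.

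For the splitting $F(C\cup S)=\langle u\rangle\oplus\langle\tilde{E}\rangle$ I would first introduce the sign homomorphism $\epsilon\colon F(C\cup S)\to\bZ$ determined by $\epsilon(c)=1$ for $c\in C$ and $\epsilon(s)=-1$ for $s\in S$. Since each edge joins a $C$-vertex to an $S$-vertex, every generator $\tilde{z}=c+s$ of $\langle\tilde{E}\rangle$ satisfies $\epsilon(\tilde{z})=0$, so $\langle\tilde{E}\rangle\subseteq\ker\epsilon$; as $\epsilon(u)=\pm1$ this immediately gives $\langle u\rangle\cap\langle\tilde{E}\rangle=0$ and $\langle u\rangle\cong\bZ$. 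To see that the two subgroups together span $F(C\cup S)$, I would use connectedness: for an edge $z$ with $\psi(z)=\{v,v'\}$ the relation $v'=\tilde{z}-v$ lets me express every vertex as an element of $\langle u\rangle+\langle\tilde{E}\rangle$, by induction on the distance from $u$ in $\Gamma$. As the vertices generate $F(C\cup S)$ the sum is everything, and combined with the previous step it is direct.

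For the presentation, write $\phi\colon F(E)\to\langle\tilde{E}\rangle$ for the natural (clearly surjective) map $e\mapsto\tilde{e}$, and set $K=\langle\sum_{a=1}^{\ell_i}(-1)^ae_{ia}:i\rangle$. The inclusion $K\subseteq\ker\phi$ is a direct computation: the boundary walk of $\cR_i$ visits vertices $v_0,v_1,\dots,v_{\ell_i-1},v_0$ with $\tilde{e}_{ia}=v_{a-1}+v_a$, and because $\Gamma$ is bipartite every closed walk, in particular each $\ell_i$, is even, so $\sum_a(-1)^a(v_{a-1}+v_a)$ telescopes to $0$. By the first isomorphism theorem it then suffices to prove the reverse inclusion $\ker\phi\subseteq K$, after which $F(E)/K\cong\langle\tilde{E}\rangle$ follows.

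The reverse inclusion is the crux, and is where planarity enters. I would reinterpret everything inside the cellular chain complex of the sphere determined by the embedding, with the vertices, edges and faces $\cR_i$ as the cells. Orienting each edge from its $C$-endpoint to its $S$-endpoint, the boundary map $\partial_1\colon F(E)\to F(C\cup S)$ sends $e\mapsto s-c$, and the automorphism $\sigma$ of $F(C\cup S)$ that fixes $S$ and negates $C$ satisfies $\sigma\phi=\partial_1$; hence $\ker\phi=\ker\partial_1$. Along the same oriented boundary walk the signs of successive edges alternate (the vertices alternate between $C$ and $S$), so $\sum_a(-1)^ae_{ia}=\pm\,\partial_2(\cR_i)$, giving $K=\im\partial_2$. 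Thus $\ker\phi=K$ is exactly the statement $\ker\partial_1=\im\partial_2$, i.e.\ the vanishing of the first homology of the sphere. This is the main obstacle, since rank-counting via Euler's formula (which shows both groups are free of rank $k-1$) yields equality only up to a finite quotient; one genuinely needs that the face boundaries generate the \emph{full} integral cycle group. I would supply this either by citing that $H_1$ of the sphere is trivial for this cellular complex, or, to keep the argument self-contained and combinatorial, by fixing a spanning tree, noting that its $k-1$ fundamental cycles form a $\bZ$-basis of $\ker\partial_1$, and invoking the Jordan curve theorem to express each such (simple) cycle as the signed sum of the faces it encloses, thereby placing it in $\im\partial_2=K$.
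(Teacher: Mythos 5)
Your proposal is correct and takes essentially the same approach as the paper: the identical sign homomorphism plus connectedness argument gives the splitting $F(C\cup S)=\langle u\rangle\oplus\langle\tilde{E}\rangle$, and your self-contained treatment of the kernel (spanning tree, fundamental cycles generating the integral flow space, each simple cycle written as the signed sum of the faces it encloses) is precisely the paper's argument, merely repackaged in cellular chain-complex language as $\ker\partial_1=\im\partial_2$, i.e.\ the vanishing of $H_1$ of the sphere. You also correctly isolate the crux --- that planarity is needed to show the face boundaries generate the \emph{full} integral cycle group rather than a finite-index subgroup --- which is exactly the point of the paper's final paragraph.
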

\begin{proof}
We claim that $F(C\cup S)$ is generated by $\{u\}\cup \tilde{E}$. 
Let $u'\in C\cup S\setminus\{u\}$. Since $\Gamma$ is connected, there is a path
$u'=u_0,u_1,\ldots,u_{\ell}=u$ from $u'$ to $u$ in $\Gamma$, where 
$z_i\in E$ and $\psi(z_i):=\{u_i,u_{i+1}\}$ for all $i$. But then
\[
u'+(-1)^{\ell-1} u=\sum_{i=0}^{\ell-1}(-1)^i\tilde{z}_i\in\langle
\tilde{E}\rangle.
\]
So $u'\in\langle \{u\}\cup \tilde{E}\rangle$ for all $u'\in C\cup S$, and our claim
follows.

Define a homomorphism $\phi:F(C\cup S)\rightarrow \bZ$ by $\phi(c)=1$
for all $c\in C$ and $\phi(s)=-1$ for all $s\in S$. Since $\Gamma$ is
bipartite with respect to the classes $C$ and $S$, we have that
$\phi(\tilde{E})=0$. But $\phi(u)=\pm 1\not=0$. Thus $\langle
u\rangle\cong\bZ$ and $F(C\cup S)=\langle u\rangle \oplus
\langle \tilde{E}\rangle$.

It remains to establish the last statement of the lemma. Let
$\pi:F(E)\rightarrow \langle\tilde{E}\rangle$ be the natural homomorphism, where
$\pi(x)=\tilde{x}$ for all $x\in E$. It suffices to show that
\begin{equation}
\label{eq:kernel}
\ker\pi=\left\langle\sum_{a=1}^{\ell_i}(-1)^ae_{ia}:i\in\{1,2,\ldots,k\}\right\rangle.
\end{equation}

Clearly $\sum_{a=1}^{\ell_i}(-1)^a\tilde{e}_{ia}=0$ (since the first
and last vertex of the boundary agree, and since $\ell_i$ is even as
$\Gamma$ is bipartite). Thus the right hand side of~\eqref{eq:kernel}
is contained in the left hand side. 

Let $g\in \ker \pi$. We have that $g=\sum_{x\in E}f(x)x$ for some
function $f:E\rightarrow\bZ$. The function $f$ is an integral edge
weighting of $\Gamma$. If we orient the edges of $\Gamma$ from $C$ to
$S$, the condition that $g\in\ker\pi$ is equivalent to the condition
that $f$ lies in the integral flow space of $\Gamma$ (the set of
edge-weightings with zero net flow at each vertex). We claim that the
integral flow space is generated by the set of simple cycles of
$\Gamma$ (flows in which all weights are $\pm 1$, and the set of edges
with non-zero weights form a cycle). This is a standard result (see
\cite[Proposition~7.13]{BondyMurty}, for example), but for the
reader's convenience we establish it here. Take a spanning
tree $S$ for $\Gamma$, and for each edge $x\in \Gamma\setminus S$, let
$C_x$ be the unique simple cycle in $S\cup \{x\}$, oriented so that
$x$ appears with weighting $1$. Given a flow $f$, the flow
$f-\sum_{x\in\Gamma\setminus S}f(x)C_x$ is zero outside $S$, and so
(since $S$ contains no cycles) is identically zero. Thus the set of
simple cycles of the form $C_x$ generate the flow space, as claimed.

Since $\Gamma$ is planar, any simple cycle $\kappa$ is the sum
of the boundary flows of those faces lying inside $\kappa$. Thus the flows
corresponding to boundaries of faces generate $\ker \pi$. But the
boundary of face $\cR_i$ is the element
$\sum_{a=1}^{\ell_i}(-1)^ae_{ia}$, and so the lemma follows.
\end{proof}

\section{Proof of Theorem~\ref{thm:isomorphism}}
\label{sec:proof_of_theorem}

Let $\cG$ be a face 2-coloured finite triangulation of the
sphere.  Let $W$ be the set
of white triangles and
$B$ be the set of black triangles in this triangulation.
As above, let $V$ be the set of vertices of $\cG$, and
write $V=R\cup C\cup S$ where $R$, $C$ and $S$ are the colour classes
of a $3$-colouring of $V$.

Let $R=\{r_1,r_2,\ldots,r_k\}$. Define a $k\times k$ integer matrix
$T=(t_{ij})$ as follows. For $i\not=j$, define $t_{ij}$ to be the
number of edges $e$, where $\psi(e)=\{c,s\}$,
such that $\{r_j,c,s\}\in W$ and
$\{r_i,c,s\}\in B$. Define $t_{ii}=-\sum_{j\not= i}t_{ij}$. Writing
$d_i$ for the number of black triangles containing $r_i$, note that
$d_i+t_{ii}$ is exactly the number of edges $e$, where $\psi(e)=\{c,s\}$,  
that occur in both a triangle $\{r_i,c,s\}\in W$ and a triangle $\{r_i,c,s\}\in B$.

Define a second matrix $T'=(t'_{ij})$ in a similar way, but with the roles of black
and white triangles reversed; so when $i\not=j$ we define $t'_{ij}$ to
be the number of edges $e$, where $\psi(e)=\{c,s\}$,
such that $\{r_j,c,s\}\in B$ and
$\{r_i,c,s\}\in W$, and we define $t'_{ii}:=-\sum_{j\not=i}
t'_{ij}$. Writing $d'_i$ for
the number of white triangles containing $r_i$, we find that
$d'_i+t'_{ii}$ is the number of edges~$e$, where $\psi(e)=\{c,s\}$,
that occur in both a triangle $\{r_i,c,s\}\in B$ and a triangle $\{r_i,c,s\}\in W$.

Clearly $t'_{ij}=t_{ji}$ for $i\not=j$, and also
$d_i+t_{ii}=d'_i+t'_{ii}$. Since the colour of the triangles
containing $r_i$ alternates as we rotate about $r_i$, we have that
$d_i=d'_i$ and hence $t_{ii}=t'_{ii}$. Thus $t_{ij}=t'_{ji}$ for all
$i,j\in\{1,2,\ldots,k\}$ and so $T'$ is the transpose of $T$. One
consequence of this fact is that both the row and the column sums of
$T$ are all zero.

Define $\cB_W$ to be the abelian group generated by elements
$x_1,x_2,\ldots,x_k$, with relations matrix $T$. So
\[
\cB_W=\left\langle x_1,\ldots,x_k \mid \sum_{j=1}^kt_{ij}x_j=0\text{
    for $i\in\{1,2,\ldots,k\}$}\right\rangle.
\]
Similarly, define $\cB_B$ to have relations matrix $T'$:
\[
\cB_B=\left\langle x_1,\ldots,x_k\mid \sum_{j=1}^kt'_{ij}x_j=0\text{
    for $i\in\{1,2,\ldots,k\}$}\right\rangle.
\]
Since the Smith Normal Form of a $k\times k$ matrix is the same as the
Smith Normal Form of its transpose, $\cB_W\cong
\cB_B$. So establishing the following lemma will suffice to prove
Theorem~\ref{thm:isomorphism}:

\begin{lemma}
\label{lem:key_lemma}
We have that $\cA_W\cong \bZ\oplus \cB_W$ and $\cA_B\cong \bZ\oplus \cB_B$.
\end{lemma}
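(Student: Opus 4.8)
The plan is to eliminate the generators lying in $R$ from the presentation of $\cA_W$, realise the resulting group as a quotient of $F(C\cup S)$, and then apply Lemma~\ref{lem:cycle_lemma} to peel off a copy of $\bZ$ and identify the complementary summand with $\cB_W$. To this end I would work with the graph $\Gamma=(C\cup S,E)$ whose edges are the edges of $\cG$ joining $C$ to $S$. Since every triangle of $\cG$ meets each colour class exactly once, each such edge lies in precisely one white and one black triangle; this gives a bijection between $E$ and $W$, under which an edge $e$ is the ``outer'' edge of a unique white triangle, whose $R$-vertex I call $r_{\rho(e)}$. Deleting the vertices of $R$ merges the triangles around each $r_i$ into a single face $\cR_i$, so $\Gamma$ is exactly the connected bipartite planar graph with $k$ faces to which Lemma~\ref{lem:cycle_lemma} is meant to apply.

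Next I would eliminate $R$. In $\cA_W$ each white relation reads $r_{\rho(e)}+\tilde e=0$, so every $r_i$ equals $-\tilde e$ for each outer edge $e$ of a white triangle at $r_i$, and in particular all such $\tilde e$ coincide. A short argument with the universal properties of the two presentations then yields $\cA_W\cong F(C\cup S)/N$, where $N$ is generated by the differences $\tilde e-\tilde e'$ as $e,e'$ range over outer edges of white triangles sharing an $R$-vertex. As each such difference lies in $\langle\tilde E\rangle$, we have $N\subseteq\langle\tilde E\rangle$; combining this with the splitting $F(C\cup S)=\langle u\rangle\oplus\langle\tilde E\rangle$ and $\langle u\rangle\cong\bZ$ from Lemma~\ref{lem:cycle_lemma} gives $\cA_W\cong\bZ\oplus(\langle\tilde E\rangle/N)$. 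It therefore remains to prove $\langle\tilde E\rangle/N\cong\cB_W$.

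Feeding in the presentation $\langle\tilde E\rangle\cong F(E)/\langle\sum_a(-1)^ae_{ia}\rangle$ from Lemma~\ref{lem:cycle_lemma}, this reduces to showing $F(E)/M\cong\cB_W$, where $M$ is generated by the face boundaries together with the differences $e-e'$ of outer white edges at a common $R$-vertex. I would then exhibit mutually inverse isomorphisms: the map induced by $e\mapsto x_{\rho(e)}$ (well defined since the generators $e-e'$ of $M$ map to $0$ and the rows of $T$ vanish in $\cB_W$), and the map $x_j\mapsto[e_j]$ for any outer white edge $e_j$ at $r_j$ (well defined because such edges agree modulo $M$). Checking that these descend to the quotients and compose to the identity in both directions is routine; the real content is the assertion that the first map carries the boundary of $\cR_i$ to row $i$ of $T$.

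This last computation is where I expect the main obstacle to lie, and it is where the definition of $T$ is used up. Around $r_i$ the triangle colours alternate, so the boundary edges of $\cR_i$ alternate between outer edges of white triangles at $r_i$ (carrying $\rho$-value $i$) and outer edges of black triangles at $r_i$, the latter being outer white edges at neighbouring vertices. Substituting this into the image of $\sum_a(-1)^ae_{ia}$ and matching the coefficients against $t_{ij}$ — using $d_i=d_i'$ and the interpretation of $d_i+t_{ii}$ as the number of edges lying in both a white and a black triangle at $r_i$ — one recovers $\sum_j t_{ij}x_j$ precisely. The delicate point is the diagonal coefficient, which mixes the definition $t_{ii}=-\sum_{j\neq i}t_{ij}$ with the count of these ``doubled'' edges, and must be handled with care so that the stray $\pm d_i x_i$ terms cancel. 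Running the entire argument with black and white interchanged then yields $\cA_B\cong\bZ\oplus\cB_B$, completing the proof of the lemma.
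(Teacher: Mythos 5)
Your proposal follows the paper's own proof essentially step for step: you eliminate the $R$-generators to present $\cA_W$ as $F(C\cup S)$ modulo relations lying in $\langle\tilde{E}\rangle$, invoke Lemma~\ref{lem:cycle_lemma} to split off $\langle u\rangle\cong\bZ$ and pass to the presentation of $\langle\tilde{E}\rangle$ by face boundaries, and then eliminate the edge generators via $e\mapsto x_{\rho(e)}$, verifying with the same bookkeeping as the paper (the alternation of colours around $r_i$, the count $d_i+t_{ii}$ of edges lying in both a white and a black triangle at $r_i$, and the cancellation of the $\pm d_i x_i$ terms via $d_i=d'_i$) that each boundary relation becomes row $i$ of $T$. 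The one point you assert rather than argue is the connectedness of $\Gamma$, a hypothesis of Lemma~\ref{lem:cycle_lemma} that the paper checks explicitly via paths of edge-adjacent triangles, but this verification is routine.
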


\begin{proof}
We prove that $\cA_W\cong \bZ\oplus \cB_W$. The proof that $\cA_B\cong
\bZ\oplus \cB_B$ is exactly the same, with the roles of black and
white triangles interchanged, so we will omit it.

For each $i\in\{1,2,\ldots,k\}$, choose $c_i\in C$ and $s_i\in S$ so
that $\{r_i,c_i,s_i\}\in W$. We may use the relations
$r_i+c_i+s_i=0$ to eliminate the generators $r_i$ from the
presentation of $\cA_W$. Each relation of the form $r_i+c+s=0$, where
$\{r_i,c,s\}$ is a white triangle, becomes $(c+s)-(c_i+s_i)=0$, and so
we have 
\[
\cA_W=F(C\cup S)/\left\langle U\right\rangle
\]
where $U$ is the set of relations of the form $(c+s)-(c_i+s_i)=0$ for
each white triangle
$\{r_i,c,s\}$.

Define $E$ to be the set of edges in the triangulation $\cG$ whose
endpoints both lie in $C\cup S$. The edges $E$ form a bipartite graph
$\Gamma$ on $C\cup S$ with vertex classes $C$ and $S$. Since the edges
of $E$ form part of a triangulation of the sphere, $\Gamma$ is
planar. Moreover, $\Gamma$ is connected. To see this, note that if two
triangles $\Delta_1,\Delta_2$ in $\cG$ share an edge, then the edges
of $\Delta_1$ and $\Delta_2$ that lie in $E$ have at least one vertex
in common. Since a sphere is connected, any two triangles are
connected by a path consisting of edge-adjacent triangles. So given
two edges $e_1$ and $e_2$, where $\psi(e_1)=\{c,s\}$ and
$\psi(e_2)=\{c',s'\}$, an edge-adjacent path from a triangle
containing $\{c,s\}$ to a triangle containing $\{c',s'\}$ induces a
path from $e_1$ to $e_2$ entirely consisting of edges in~$E$.

Recall from Section~\ref{sec:preliminary} the notation $\tilde{x}=c+s$
for an element in the free abelian group $F(C\cup S)$ corresponding to
an edge $x\in E$, where $\psi(x):=\{c,s\}$.  For
$i\in\{1,2,\ldots,k\}$, let $x_i$ be an edge such that
$\psi(x_i)=\{s_i,c_i\}$ (where $s_i$ and $c_i$ were chosen above)
adjacent to a white triangle $\{r_i,s_i,c_i\}\in W$.  Note that the
relations in $U$ are exactly those of the form
$\tilde{x}-\tilde{x_i}=0$ where there is a white triangle containing
$r_i$ and the edge $x$. Thus, in $\cA_W$, every $\tilde{x}\in
\tilde{E}$ is equal to $\tilde{x}_i$ for some $i$. Let $u\in C\cup S$.
Since all the relations in $U$ lie in $\langle\tilde{E}\rangle$,
Lemma~\ref{lem:cycle_lemma} implies (in the notation there) that
\begin{align}
\nonumber
\cA_W&\cong \langle u\rangle\oplus (\langle\tilde{E}\rangle/\langle U\rangle)\\
\label{eq:key_decomposition}
&\cong\bZ\oplus\left(F(E)/\left\langle
U'\cup\left\{\sum_{a=1}^{\ell_i}(-1)^ae_{ia}:i\in\{1,2,\ldots,k\}\right\}\right\rangle\right),
\end{align}
where $U'$ is a set of relations of the form $x-x_i=0$ for all $x\in
E$ contained in a white triangle involving $r_i$. Recall that the
boundary of region $\cR_i$ consists of the edges
$e_{i1},e_{i2},\ldots,e_{i\ell_i}$, in other words, the rotation at
$r_i$ in $\cG$. Thus the edges in the boundary are all contained in
either a black or a white triangle that is adjacent to $r_i$, and the
colour alternates as we travel round the boundary.  We may choose to
begin the boundary walk at an edge $e_{i1}$ that is contained in a
white triangle that is adjacent to $r_i$.

Each relation of the form $x-x_i=0$ can be used to eliminate one
generator $x$ from our presentation of $\cA_W$. Using all relations of this form,
we can eliminate all generators other than $x_1,x_2,\ldots ,x_k$ from
our presentation.  We claim that each relation of the form 
$\sum_{j=1}^{\ell_i}(-1)^je_{ij}=0$ is transformed to the relation
$\sum_{j=1}^kt_{ij}x_j=0$ during this elimination process. To see
this, note that the edges $e_{ia}$ where $a$ is odd are exactly the edges of
each of the $d'_i$ white triangles containing $r_i$ and so are
replaced by $x_i$. Each such edge appears with a negative sign
in the relation, so $\sum_{a\text{ odd}}(-1)^ae_{ia}=-d'_{i}x_i$. The
remaining edges lie in black triangles containing $r_i$, and
so are replaced by various generators $x_{j_a}$ where $j_a\in\{1,2,\ldots
,k\}$. When $i\not=j$, the
definition of the integers $t_{ij}$ implies that exactly $t_{ij}$ of
the edges are replaced by $x_j$. Moreover, we remarked when we defined
the matrix $T$ that the number of edges $e$, where $\psi(e)=\{c,s\}$,
that are contained in both a triangle
$\{r_i,c,s\}\in B$ and a triangle $\{r_i,c,s\}\in W$ is exactly $d_i+t_{ii}$, where $d_i$ is the number
of black triangles containing $r_i$. These $d_i+t_{ii}$ edges are replaced by
$x_i$, and so
\begin{align*}
\sum_{a=1}^{\ell_i}(-1)^ae_{ia}&=-d'_{i}x_i+\sum_{a\text{
    even}}e_{ia}\\
&=-d'_ix_i+(d_i+t_{ii})x_i+\sum_{j\not=i}t_{ij}x_j\\
&=\sum_{j=1}^kt_{ij}x_j,
\end{align*}
since $d_i=d'_i$. Thus our claim follows.

This change of presentation, together
with~\eqref{eq:key_decomposition} implies that
\begin{align*}
\cA_W&\cong\bZ\oplus
\left(F(\{x_1,x_2,\ldots,x_k\})/\left\langle\sum_{j=1}^kt_{ij}x_j:i\in\{1,2,\ldots,k\}\right\rangle\right)\\
&\cong \bZ\oplus\cB_W,
\end{align*}
and so the lemma is proved.
\end{proof}

\section{The order of the torsion subgroup}
\label{sec:torsion}

The matrix $T$ has determinant zero, since the entries in each row
(and each column) sum to zero. Thus $\cB_W$ is an infinite
group. Cavenagh and Wanless show (in our notation) that $\cB_W\cong
\bZ\oplus \cC_W$, where $\cC_W$ is finite. There is an alternative
proof of this fact via Tutte's Matrix Tree Theorem (see
Tutte~\cite[Chapter~VI]{Tutte}), which we now give; this alternative
proof has the advantage that it relates the order of $\cC_W$ to the
tree number of a certain directed graph $D$, which we now define.

Let $D$ be a directed graph on vertices $z_1,z_2,\ldots ,z_k$, where
we add exactly $t_{ij}$ directed edges from $z_j$ to $z_i$. So $D$ is
a directed multigraph with no loops. In terms of our triangulation
$\cG$, the vertices of $D$ correspond to the regions $\cR_i$ defined
in Section~\ref{sec:preliminary}, with a directed edge from $z_j$ to
$z_i$ for every white triangle in $\cR_j$ that shares an edge with a
black triangle in $\cR_i$. In Tutte's terminology, $-T$ is the
\emph{Kirchoff matrix of the directed graph $D$ with unit
  conductances}. (This matrix is also known as the asymmetric
Laplacian of $D$: see Biggs~\cite{Biggs}.) Since the rows and columns
of $T$ all sum to zero, we see that the in-degree and out-degree of
any vertex $z_i$ of $D$ is zero; in other words, $D$
is~\emph{Eulerian}.

We claim that $D$ is \emph{strongly connected} (in other words, for
all ordered pairs of distinct vertices $z_j$ and $z_i$ there is an
ordered path from $z_j$ to $z_i$). Let $j$ be fixed, and let
$I\subseteq \{1,2,\ldots,k\}$ be defined by $i\in I$ if and only if
there is a directed path from $z_j$ to $z_i$ in $D$. Clearly $j\in I$,
so $I$ is non-empty. Let $\cS:=\bigcup_{i\in I}\cR_i$. Suppose, for a
contradiction, that $I\not=\{1,2,\ldots,k\}$, so $\cS$ is not the
whole sphere. The border of $\cS$ contains a non-trivial cycle
consisting of edges whose end points lie in $C\cup S$. Since the edges
with endpoints in $C\cup S$ form a bipartite graph, this cycle has at
least two edges. Each edge of the cycle is adjacent to a unique
triangle inside $\cS$, and the colour of this triangle alternates as
we move around the cycle. Thus there exists an edge $e$, where
$\psi(e)=\{s,c\}$, on the border of $\cS$ that is adjacent to a white
triangle in $\cS$ and a black triangle in $\cR_{i'}$ for some
$i'\not\in I$. But then there is a directed edge in $D$ from some
$z\in\{z_i:i\in I\}$ to $z_{i'}$, and so $i'\in I$. This contradiction
shows that $I=\{1,2,\ldots,k\}$ and so $D$ is strongly connected as
claimed.

Let $z$ be a
vertex of $D$. An~\emph{arborescence} diverging from $z$ is a directed
subtree of $D$ containing $z$ with all edges oriented away from
$z$. The number $d$ of spanning arborescences in $D$ diverging from
$z$ does not depend on $z$ (see~\cite[Theorem~VI.23]{Tutte}); this
number $d$ is known as the \emph{tree number}~$d$ of~$D$. Since $D$ is
strongly connected and Eulerian, $d>0$. Tutte
shows~\cite[Theorem~VI.28]{Tutte}, as a corollary of the Matrix Tree
Theorem, that $d=\det(-\overline{T})$, where $\overline{T}$ is the
matrix obtained by removing the last row and column from~$T$.

\begin{theorem}
\label{thm:group_order}
Let $d$ be the (non-zero) tree number of the directed graph $D$
defined above. Then $\cB_W\cong \bZ\oplus \cC_W$, where $\cC_W$ is a finite
abelian group of order $d$.
\end{theorem}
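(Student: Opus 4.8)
The plan is to identify $\cB_W$ with the cokernel of the integer matrix $T$ and to read off its structure from the Smith normal form, combining the determinantal description of the torsion order with the special structure of $T$ forced by its vanishing row and column sums.

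First I would observe that $\cB_W\cong \bZ^k/\langle\text{rows of }T\rangle=\mathrm{coker}(T)$, so the isomorphism type of $\cB_W$ is governed by the Smith normal form of $T$. The free part of $\mathrm{coker}(T)$ has rank $k-\mathrm{rank}(T)$, and its torsion subgroup has order equal to the greatest common divisor $\Delta_{k-1}$ of the $(k-1)\times(k-1)$ minors of $T$ (equivalently, the product of the nonzero invariant factors). Tutte's Matrix Tree Theorem gives $d=\det(-\overline T)\neq 0$, so $\overline T$ is nonsingular and $\mathrm{rank}(T)\geq k-1$; since the row sums of $T$ vanish we also have $\det T=0$, whence $\mathrm{rank}(T)=k-1$. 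Thus the free part has rank exactly $1$, giving $\cB_W\cong\bZ\oplus\cC_W$ with $\cC_W$ finite of order $\Delta_{k-1}$.

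It then remains to show $\Delta_{k-1}=d$, and here I would exploit that $T$ has both zero row sums and zero column sums. Writing $\mathbf 1$ for the all-ones vector, we have $T\mathbf 1=0$ and $\mathbf 1^\top T=0$, so $\mathbf 1$ spans both the kernel and the left kernel of $T$ (each one-dimensional by the rank computation). Because $\det T=0$, the identity $T\,\mathrm{adj}(T)=\mathrm{adj}(T)\,T=(\det T)I=0$ shows that every column of $\mathrm{adj}(T)$ lies in $\ker T=\langle\mathbf 1\rangle$ and every row lies in the left kernel $\langle\mathbf 1\rangle$. Hence every entry of $\mathrm{adj}(T)$ equals a single common value $\kappa$; equivalently, all $(k-1)\times(k-1)$ cofactors of $T$ coincide. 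Since every $(k-1)\times(k-1)$ minor is obtained by deleting one row and one column, each such minor equals $\pm\kappa$, and the specific minor $\det\overline T$ (deleting the last row and column) identifies $\kappa=\det\overline T=(-1)^{k-1}d$. Therefore every $(k-1)\times(k-1)$ minor has absolute value $d$, so $\Delta_{k-1}=d$ and $|\cC_W|=d$.

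The routine ingredients here are the cokernel/Smith-normal-form dictionary and the determinantal-divisor formula for the torsion order, both standard. The only step with genuine content is the all-cofactors-equal phenomenon: verifying that the simultaneous vanishing of row and column sums forces every maximal minor of $T$ to share the same absolute value. I expect the main obstacle to be presenting this cleanly and linking it to Tutte's identity $d=\det(-\overline T)$, so that the single minor whose value Tutte controls pins down the greatest common divisor of all of them.
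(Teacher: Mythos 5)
Your proof is correct, but it reaches the conclusion by a genuinely different route from the paper. The paper splits off the free summand explicitly: since the row sums of $T$ vanish, the coefficient-sum map $\phi:\cB_W\rightarrow\bZ$ given by $\phi\left(\sum_{i=1}^k a_ix_i\right)=\sum_{i=1}^k a_i$ is well defined and surjective, so $\cB_W=\langle x_k\rangle\oplus\cC_W$ with $\cC_W=\ker\phi\cong\cB_W/\langle x_k\rangle$; setting $x_k=0$ gives a presentation of this quotient on $\overline{x}_1,\ldots,\overline{x}_{k-1}$ whose relations matrix is exactly $\overline{T}$ (the relation coming from the last row of $T$ is redundant because the column sums also vanish), whence $|\cC_W|=|\det\overline{T}|=d$ by Tutte's identity --- only that single minor is ever needed. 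You instead pass to the Smith normal form, where the torsion order is the gcd $\Delta_{k-1}$ of \emph{all} maximal minors, and so you must supply the extra fact that all cofactors of $T$ coincide; your adjugate argument (columns of $\mathrm{adj}(T)$ lie in $\ker T=\langle\mathbf{1}\rangle$, rows in the left kernel, both one-dimensional by the rank computation $\mathrm{rank}(T)=k-1$) is correct and is a standard lemma in Matrix--Tree theory. One pedantic point: $\bZ^k/\langle\text{rows of }T\rangle$ is $\mathrm{coker}(T^{\top})$ rather than $\mathrm{coker}(T)$, but this is harmless since the Smith normal forms of $T$ and $T^{\top}$ agree (the very observation the paper uses to deduce $\cB_W\cong\cB_B$), and your argument is in any case symmetric under transposition. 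As for what each approach buys: the paper's route is shorter and works directly at the level of presentations; yours avoids the bookkeeping of eliminating a generator and discarding the redundant relation, and it establishes the stronger, reusable fact that every $(k-1)\times(k-1)$ minor of $T$ equals $\pm d$.
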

\begin{proof}
Since the row sums of the matrix $T$ defining $\cB_W$ are all zero,
the map $\phi:\cB_W\rightarrow \bZ$ given by
$\phi(\sum_{i=1}^ka_ix_i)=\sum_{i=1}^ka_i$ is well defined. Since
$\phi(x_k)=1$, we see that $\phi$ is onto and $x_k$ has infinite
order. Thus
\[
\cB_W=\langle x_k\rangle\oplus \cC_W
\]
where $\cC_W=\ker \phi$ and $\langle x_k\rangle\cong \bZ$.

Let $Q=\cB_W/\langle x_k\rangle\cong \cC_W$, and define
$\overline{x}_i=x_i+\langle x_i\rangle\in Q$. Then $Q$ is generated by
$\overline{x}_1,\overline{x}_2,\ldots,\overline{x}_{k-1}$, subject to
the relations matrix $\overline{T}$ obtained from the matrix $T$ by removing
its last row and column. We argued above that $\det \overline{T}=(-1)^{k-1}
d\not=0$, and so $|\cC_W|=|Q|=d$, as required.
\end{proof}

The structural insight provided by Theorem~\ref{thm:group_order}
enable us to construct examples of triangulations where the group
$\cA_W$ is large:

\begin{theorem}
\label{thm:exponential}
Let $\cG$ be the tiling of the sphere into $k$ regions $\cR_1,
\cR_2,\ldots,\cR_{k}$ given in Figure~\ref{fig:exponential}. \emph{(}In this
tiling, $\cR_1$ shares edges with $\cR_2$ and $\cR_{k}$. We
identify vertices and edges appropriately; so for
example $c_{k+1}=c_{1}$.\emph{)} Then
\[
\cA_W\cong \bZ\oplus\bZ\oplus \cC_W,
\]
where $|\cC_W|=k 2^{k-1}$. Moreover, $\cA_W$ has rank $k+1$.
\end{theorem}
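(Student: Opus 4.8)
The plan is to feed the triangulation $\cG$ through the machinery already in place, reducing the whole statement to one determinant and one Smith-Normal-Form computation. By Lemma~\ref{lem:key_lemma} we have $\cA_W\cong\bZ\oplus\cB_W$, and by Theorem~\ref{thm:group_order} we have $\cB_W\cong\bZ\oplus\cC_W$, with $\cC_W$ finite of order equal to the tree number $d$ of the associated directed graph $D$. Chaining these isomorphisms gives the claimed shape $\cA_W\cong\bZ\oplus\bZ\oplus\cC_W$ for free, so only two tasks remain: evaluate $d=|\cC_W|$, and find the minimal number of generators of $\cC_W$ (the two infinite cyclic summands already contributing $2$ to the rank of $\cA_W$).

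First I would read the matrix $T$ off Figure~\ref{fig:exponential}. Since each region $\cR_i$ meets only its two cyclic neighbours $\cR_{i-1}$ and $\cR_{i+1}$, the matrix $T$ is supported on the diagonal and the two cyclic off-diagonals; counting, for each adjacent pair, the edges shared between a white triangle of one region and a black triangle of the other gives $t_{i,i+1}=t_{i,i-1}=2$ and hence $t_{ii}=-4$ (indices mod $k$). In other words $-T=2L$, where $L$ is the ordinary Laplacian of the cycle $C_k$; equivalently $D$ is $C_k$ with every edge doubled in each direction. The tree number is then $d=\det(-\overline{T})=\det(2\overline{L})=2^{k-1}\det\overline{L}$, and $\det\overline{L}$ is exactly the number of spanning trees of $C_k$, namely $k$. (Alternatively one evaluates the circulant product $\tfrac1k\prod_{m=1}^{k-1}\lambda_m$ with $\lambda_m=2(2-\zeta^m-\zeta^{-m})$ and $\zeta=e^{2\pi i/k}$; both routes give $d=k\,2^{k-1}$.) This settles $|\cC_W|=k\,2^{k-1}$.

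For the rank I would upgrade from the order to the full invariant-factor data. Because the Smith Normal Form of a scalar multiple $cM$ is $c$ times that of $M$, and because the reduced Laplacian $\overline{L}$ of $C_k$ has Smith Normal Form $\mathrm{diag}(1,\ldots,1,k)$ (the critical group of a cycle being cyclic of order $k$), the matrix $\overline{T}=-2\overline{L}$ has Smith Normal Form $\mathrm{diag}(2,\ldots,2,2k)$. Hence $\cC_W\cong(\bZ/2\bZ)^{k-2}\oplus\bZ/2k\bZ$, a group of rank $k-1$, and adding the two infinite cyclic summands gives $\cA_W$ rank $k+1$. The genuinely delicate steps here are not the algebra but the bookkeeping: the order $|\cC_W|$ falls out of a single determinant, whereas the rank needs the finer Smith-Normal-Form structure, and everything rests on extracting the entries $t_{ij}$ faithfully from Figure~\ref{fig:exponential}. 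That translation from picture to matrix is where I expect the main obstacle to lie, and I would guard against it by checking the resulting $D$ against the already-established facts that it must be Eulerian and strongly connected.
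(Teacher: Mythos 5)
Your proposal is correct, and its first half is essentially the paper's argument: chain Lemma~\ref{lem:key_lemma} and Theorem~\ref{thm:group_order} to get $\cA_W\cong\bZ\oplus\bZ\oplus\cC_W$ with $|\cC_W|$ equal to the tree number of $D$, read off the circulant matrix $T$ (diagonal $-4$, cyclic off-diagonals $2$, matching the paper's count that each $\cR_i$ has four black triangles, two sharing edges with white triangles of each neighbour), and identify $D$ as the bidirected cycle with all edges doubled. The only cosmetic difference there is that you evaluate the tree number as $\det(-\overline{T})=2^{k-1}\det\overline{L}=k\,2^{k-1}$ via the undirected matrix-tree theorem, while the paper simply counts arborescences of $D$ directly. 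Where you genuinely diverge is the rank. The paper gets the lower bound on the rank of $\cB_W$ by the cheap parity observation that every entry of $T$ is even, so $\cB_W/[2]\cB_W\cong(\bZ_2)^k$ has rank $k$; it never identifies $\cC_W$. You instead compute the full Smith Normal Form: since $\overline{T}=-2\overline{L}$, since the SNF of $cM$ is $c$ times that of $M$ (immediate from the gcd-of-minors description of invariant factors), and since the critical group of the cycle $C_k$ is cyclic of order $k$, the SNF of $\overline{T}$ is $\mathrm{diag}(2,\ldots,2,2k)$, whence $\cC_W\cong(\bZ_2)^{k-2}\oplus\bZ_{2k}$; this has $2$-rank $k-1$ (the cyclic factor has even order), giving $\cA_W$ rank $k+1$. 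Both of your auxiliary facts are sound, and a spot check at $k=3$ (where $\overline{T}$ has SNF $\mathrm{diag}(2,6)$, order $12=3\cdot 2^2$) confirms the computation. Your route buys strictly more than the paper's: the exact isomorphism type of $\cC_W$, from which order and rank drop out of a single computation rather than two separate arguments. The trade-off is that the paper's parity argument is more robust --- it transfers verbatim to the generalised construction at the end of Section~\ref{sec:torsion}, where entries divisible by $w$ give a quotient $(\bZ_w)^k$ --- and needs no input about sandpile groups. Your instinct that the only fragile step is translating Figure~\ref{fig:exponential} into the entries $t_{ij}$ is well placed, and your Eulerian/strong-connectivity sanity checks are a sensible guard; as it happens, your entries agree exactly with the paper's.
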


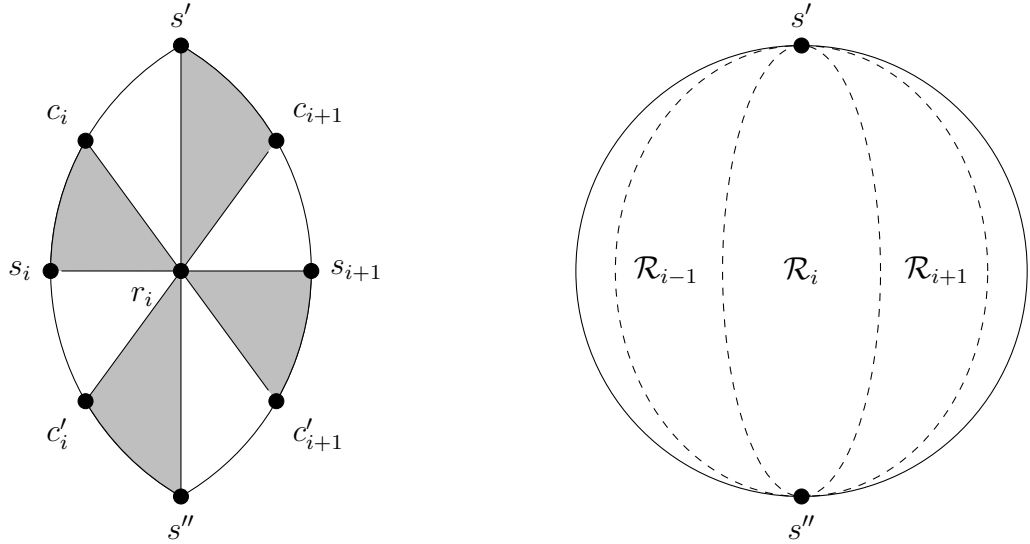
\begin{figure} 
\begin{center}
\begin{tikzpicture}[fill=gray!50, scale=1.5,
vertex/.style={circle,inner sep=2,fill=black,draw}]

\coordinate (r1) at (0,0);

\coordinate (s0) at (0,2);
\coordinate (s00) at (0,-2);

\draw[name path=lpath] (s00) arc (240:120:2.31);
\draw[name path=rpath] (s00) arc (-60:60:2.31);

\path (s00) arc (240:210:2.31) node [name=ci2]{}
arc (210:180:2.31) node [name=si1]{}
arc (180:150:2.31) node [name=ci1]{};

\path (s00) arc (-60:-30:2.31) node [name=cii2]{}
arc (-30:0:2.31) node [name=sii1]{}
arc (0:30:2.31) node [name=cii1]{};

\filldraw (r1) -- (cii1) arc (30:60:2.31) -- (r1);
\filldraw (r1) -- (cii2) arc (-30:0:2.31) -- (r1);
\filldraw (r1) -- (s00) arc (240:210:2.31) -- (r1);
\filldraw (r1) -- (si1) arc (180:150:2.31) -- (r1);

\node at (s0) [vertex,label=north:$s'$]{};
\node at (s00) [vertex,label=south:$s''$]{};
\node at (ci2) [vertex,label=south west:$c'_{i}$]{};
\node at (si1) [vertex,label=west:$s_{i}$]{};
\node at (ci1) [vertex,label=north west:$c_{i}$]{};
\node at (cii1) [vertex,label=north east:$c_{i+1}$]{};
\node at (sii1) [vertex,label=east:$s_{i+1}$]{};
\node at (cii2) [vertex,label=south east:$c'_{i+1}$]{};
\node at (r1) [vertex,label=south west:$r_i\,\,$]{};

\draw (5.5,0) ellipse [x radius=2,y radius=2];
\draw[dashed] (5.5,0) ellipse [x radius=1.65,y radius = 2];
\draw[dashed] (5.5,0) ellipse [x radius=0.7, y radius = 2];

\node at (5.5,2) [vertex,label=north:$s'$]{};
\node at (5.5,-2) [vertex,label=south:$s''$]{};

\node at (4.3,0) {$\mathcal{R}_{i-1}$};
\node at (5.5,0) {$\mathcal{R}_i$};
\node at (6.7,0) {$\mathcal{R}_{i+1}$};
\end{tikzpicture}
\end{center}
\caption{A typical region $\cR_i$, and tiling the sphere with these
  regions.}
\label{fig:exponential}
\end{figure}

\begin{proof}
  Each region $\cR_i$ contains exactly four black triangles; two share
  an edge with a white triangle in $\cR_{i+1}$ and two with white
  triangles in $\cR_{i-1}$ (subscripts being taken cyclically). Thus
  the relations matrix $T$ for $\cB_W$ is circulant, with all entries
  on its main diagonal being $-4$, bordered by two diagonals whose
  entries are all equal to $2$ (and all other entries $0$).  The
  matrix $T$ induces a directed graph $D$, as defined before the
  statement of 
  Theorem~\ref{thm:group_order}. (The graph $D$ is depicted in
  Figure~\ref{fig:example_D} in the case when $k=6$.) Now, $D$ clearly
  has tree number $k 2^{k-1}$. The first statement of the theorem now
  follows from Theorem~\ref{thm:group_order}, together with the fact
  that
\[
\cA_W\cong \bZ\oplus \cB_W\cong \bZ\oplus\bZ\oplus \cC_W.
\]

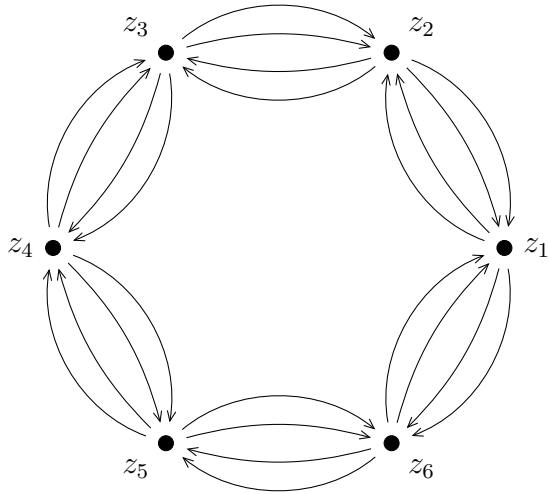
\begin{figure}
\begin{center}
\begin{tikzpicture}[scale=1.5,>=angle 45,shorten
  >=5,shorten <=5,
vertex/.style={circle,inner sep=2,fill=black,draw}]

\node at (0:2) [vertex,name=z0,label=east:$z_1$]{};
\node at (60:2) [vertex,name=z1,label=north east:$z_2$]{};
\node at (120:2) [vertex,name=z2,label=north west:$z_3$]{};
\node at (180:2) [vertex,name=z3,label=west:$z_4$]{};
\node at (240:2) [vertex,name=z4,label=south west:$z_5$]{};
\node at (300:2) [vertex,name=z5,label=south east:$z_6$]{};

\draw[->,bend left=15] (z0) edge (z1);
\draw[->,bend left=15] (z1) edge (z2);
\draw[->,bend left=15] (z2) edge (z3);
\draw[->,bend left=15] (z3) edge (z4);
\draw[->,bend left=15] (z4) edge (z5);
\draw[->,bend left=15] (z5) edge (z0);

\draw[->,bend left=40] (z0) edge (z1);
\draw[->,bend left=40] (z1) edge (z2);
\draw[->,bend left=40] (z2) edge (z3);
\draw[->,bend left=40] (z3) edge (z4);
\draw[->,bend left=40] (z4) edge (z5);
\draw[->,bend left=40] (z5) edge (z0);

\draw[->,bend left=15] (z0) edge (z5);
\draw[->,bend left=15] (z5) edge (z4);
\draw[->,bend left=15] (z4) edge (z3);
\draw[->,bend left=15] (z3) edge (z2);
\draw[->,bend left=15] (z2) edge (z1);
\draw[->,bend left=15] (z1) edge (z0);

\draw[->,bend left=40] (z0) edge (z5);
\draw[->,bend left=40] (z5) edge (z4);
\draw[->,bend left=40] (z4) edge (z3);
\draw[->,bend left=40] (z3) edge (z2);
\draw[->,bend left=40] (z2) edge (z1);
\draw[->,bend left=40] (z1) edge (z0);
\end{tikzpicture}
\end{center}
\caption{The directed graph $D$ when $k=6$.}
\label{fig:example_D}
\end{figure}
 
Since $\cB_W$ is generated by $x_1,x_2,\ldots,x_k$, by definition,
and since $\cA_W\cong \bZ\oplus \cB_W$, we see that the rank of
$\cA_W$ is at most $k+1$. So to prove the theorem, it suffices to show
that the rank of $\cA_W$ is at least $k+1$. In other words, we need to
show that the rank of $\cB_W$ is at least $k$.

Note that all the entries of the matrix $T$ are even, and so the
relations given by the rows of $T$ are all consequences of relations
$2x_i=0 \text{ for }i\in\{1,2,\ldots ,k\}$. So, if we write
$[2]\cB_W:=\{x+x:x\in \cB_W\}$, we see that $\cB_W/[2]\cB_W\cong
(\bZ_2)^k$. But this quotient group has rank $k$, and so the rank of
$\cB_W$ is at least $k$, and the theorem follows.
\end{proof}

We remark that there 
is no pair of disjoint edges $\{e_1,e_2\}$ with $\psi(e_1)=\psi(e_2)$
in the triangulation in
Theorem~\ref{thm:exponential}; nor are there distinct triangles
sharing the same three vertices. So the triangulation in this example
gives rise to a latin bitrade.

In the triangulation in Theorem~\ref{thm:exponential}, there are $t$
triangles of each colour, where $t=4k$. So the order of the torsion
subgroup $\cC_W$ of $\cA_W$ is exponential in $t$. Cavenagh and
Wanless~\cite[Corollary~5]{CavenaghWanless} show that for any
spherical latin bitrade with $t$ triangles of each colour,
$|\cC_W|<\frac{\sqrt{2}}{3}6^{(t-1)/3}$. The triangulation in
Theorem~\ref{thm:exponential} shows that an exponential upper bound on
the order of the torsion subgroup of $\cA_W$ is the best that could be
hoped for (though it is quite possible that the constant in the
exponential could be improved further).

We remarked in the proof of Theorem~\ref{thm:exponential} that the
rank of $\cA_W$ is always bounded above by $k+1$. So the rank of
$\cA_W$ in Theorem~\ref{thm:exponential} is maximal. Cavenagh and
Wanless~\cite[Corollary~7]{CavenaghWanless} construct spherical latin
bitrades where the rank of $\cA_W$ grows at least logarithmically in
$k$. Indeed, these bitrades have the extra property that whenever 
either partial latin square embeds in an abelian group $A$, the torsion rank of $A$ is
bounded below by a logarithmic function of $k$. It would be
interesting to know whether this logarithmic bound is close to being
the best possible.

Finally, we remark that the construction in
Theorem~\ref{thm:exponential} can be easily
generalised. Define a new triangulation, where each region $\cR_i$
consists of $2w$ black triangles, with $w$ of these triangles sharing
an edge with white triangles in $\cR_{i+1}$, and the remaining $w$
triangles sharing an edge with triangles in $\cR_{i-1}$. (We tile the
sphere with the regions $\cR_i$ in the same way as in
Theorem~\ref{thm:exponential}.) It is easy to show that for this
triangulation $|\cC_W|=k w^k$, and the group $\cB_W$ has a quotient
isomorphic to $(\bZ_w)^k$. In particular, this triangulation shows
that there is nothing special about the prime $2$ here: for any
prime $p$, there are examples of triangulations such that the
$p$-primary subgroups of $\cC_W$ have high rank.

\section{Comments}
\label{sec:comments}

This section is divided into two. The first subsection explores how
far our results extend to triangulations of surfaces of higher genus.
The second subsection states some questions and open problems
regarding $\cA_W$ for face $2$-colourable triangulations of the
sphere.

\subsection{Higher genus}

This section gives some examples that explore the extent to which
Theorem~\ref{thm:isomorphism} and Theorem~\ref{thm:group_order} can be
generalised.

The techniques in our paper depend crucially on the fact that the
vertices of our triangulation can be vertex $3$-coloured. For
triangulations of non-spherical surfaces, this is no longer true in
general, and so $2$-colourable triangulations no longer necessarilly
correspond to a biembedding of a pair of PTD$_\lambda(3,n)$.
Figure~\ref{fig:genus1_nasty_examples} gives three vertex
non-$3$-colourable triangulations of the torus; these examples show
that $\cA_W$ no longer always has free rank~$2$, and any
straightforward generalisation of Theorem~\ref{thm:group_order} is
unlikely. Moreover, consider the (vertex non-3-colourable)
nonorientable genus $3$
triangulation on vertices $\{0,1,2,3,4,5,6,7,8\}$ with white and black
triangles given by:
\begin{align*}
W&=\{012, 034, 057, 068, 135, 146, 178, 236, 247, 258\}\\
B&=\{013, 026, 047, 058, 124, 157, 168, 235, 278, 346\}.
\end{align*}
This example is due to Forbes, Grannell and Griggs~\cite{FGG}.  This
triangulation has $\cA_W\cong \bZ_3\oplus \bZ_6$ and $\cA_B\cong
\bZ_3\oplus\bZ_3$. Thus Theorem~\ref{thm:isomorphism} cannot extend to
this case. From Forbes et al, we can deduce that this
triangulation is a counterexample with the smallest number of
triangles if we restrict ourselves to having no 
pair of distinct edges $\{e_1,e_2\}$ with $\psi(e_1)=\psi(e_2)$ (in other words triangulations of simple graphs).

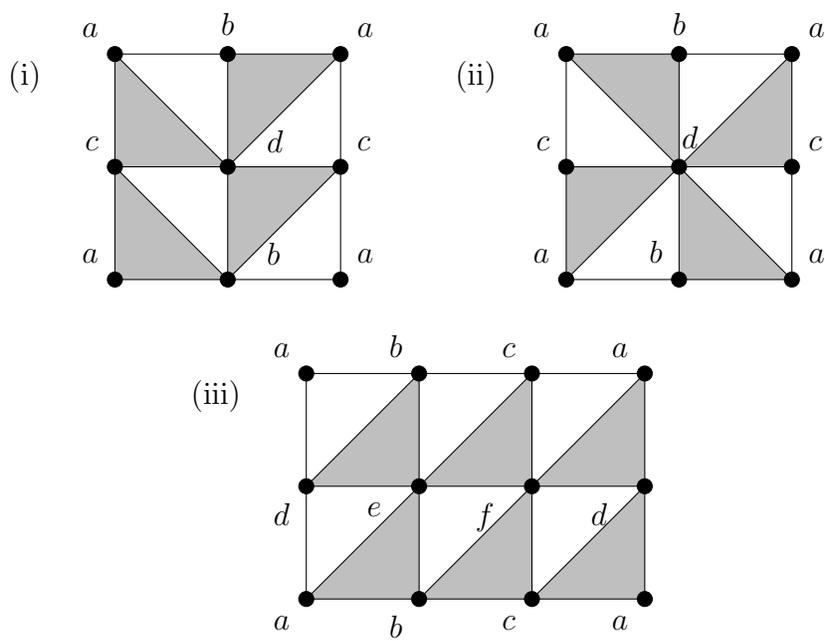
\begin{figure}
\begin{center}
\begin{tikzpicture}[fill=gray!50, scale=1.5,
vertex/.style={circle,inner sep=2,fill=black,draw}]

\draw (-0.8,1.8) node {(i)};

\filldraw (0,0) -- (1,0) -- (0,1) -- cycle;
\filldraw (1,0) -- (2,1) -- (1,1) -- cycle;
\filldraw (1,1) -- (2,2) -- (1,2) -- cycle;
\filldraw (0,1) -- (1,1) -- (0,2) -- cycle;
\draw (1,0) -- (2,0) -- (2,1) -- (2,2);
\draw (0,2) -- (1,2);

\node at (0,0) [vertex,label=north west:$a$] {};
\node at (1,0) [vertex,label=30:$\,\,\,\,b$] {};
\node at (2,0) [vertex,label=north east:$a$] {};
\node at (0,1) [vertex,label=north west:$c$] {};
\node at (1,1) [vertex,label=30:$\,\,\,\,d$] {};
\node at (2,1) [vertex,label=north east:$c$] {};
\node at (0,2) [vertex,label=north west:$a$] {};
\node at (1,2) [vertex,label=north:$b$] {};
\node at (2,2) [vertex,label=north east:$a$] {};

\draw (3.2,1.8) node {(ii)};

\filldraw (4,0) -- (5,1) -- (4,1)  -- cycle;
\filldraw (5,0) -- (6,0) -- (5,1) -- cycle;
\filldraw (5,1) -- (5,2) -- (4,2) -- cycle;
\filldraw (5,1) -- (6,1) -- (6,2) -- cycle;
\draw (4,0) -- (5,0);
\draw (6,0) -- (6,1);
\draw (4,1) -- (4,2);
\draw (5,2) -- (6,2);

\node at (4,0) [vertex,label=north west:$a$] {};
\node at (5,0) [vertex,label=north west:$b$] {};
\node at (6,0) [vertex,label=north east:$a$] {};
\node at (4,1) [vertex,label=north west:$c$] {};
\node at (5,1) [vertex,label=north:$\,\,\,\,d$] {};
\node at (6,1) [vertex,label=north east:$c$] {};
\node at (4,2) [vertex,label=north west:$a$] {};
\node at (5,2) [vertex,label=north:$b$] {};
\node at (6,2) [vertex,label=north east:$a$] {};

\end{tikzpicture}
\end{center}

\begin{center}
\begin{tikzpicture}[fill=gray!50, scale=1.5,
vertex/.style={circle,inner sep=2,fill=black,draw}]

\draw (-0.8,1.8) node {(iii)};

\filldraw (0,0) -- (1,0) -- (1,1) -- cycle;
\filldraw (1,0) -- (2,0) -- (2,1) -- cycle;
\filldraw (2,0) -- (3,0) -- (3,1) -- cycle;
\filldraw (0,1) -- (1,1) -- (1,2) -- cycle;
\filldraw (1,1) -- (2,1) -- (2,2) -- cycle;
\filldraw (2,1) -- (3,1) -- (3,2) -- cycle;

\draw (0,0) -- (0,1) -- (0,2) -- (1,2) -- (2,2) -- (3,2);

\draw (0,0) node [vertex,label=south west:$a$]{};
\draw (1,0) node [vertex,label=south west:$b$]{};
\draw (2,0) node [vertex,label=south west:$c$]{};
\draw (3,0) node [vertex,label=south west:$a$]{};
\draw (0,1) node [vertex,label=south west:$d$]{};
\draw (1,1) node [vertex,label=south west:$e\,\,\,\,$]{};
\draw (2,1) node [vertex,label=south west:$f\,\,\,\,$]{};
\draw (3,1) node [vertex,label=south west:$d\,\,\,\,$]{};
\draw (0,2) node [vertex,label=north west:$a$]{};
\draw (1,2) node [vertex,label=north west:$b$]{};
\draw (2,2) node [vertex,label=north west:$c$]{};
\draw (3,2) node [vertex,label=north west:$a$]{};

\end{tikzpicture}
\end{center}
\caption{Three triangulations of the torus, with $\cA_W\cong \bZ_3$,
  $\cA_W\cong \bZ\oplus\bZ$ and $\cA_W\cong\bZ_9$ respectively.}
\label{fig:genus1_nasty_examples}
\end{figure}

We now restrict ourselves to face $2$-colourable triangulations that
are also vertex $3$-colourable. (Such triangulations give rise to biembeddings of 
PTD$_\lambda(3,n)$.) We note that the vertex colouring induces
a consistent orientation on the faces of the triangulation, so
the surfaces we consider are all orientable.

Figure~\ref{fig:non_isomorphism_example} gives an example of a vertex
$3$-colourable triangulation of the torus with $\cA_W\cong
\bZ\oplus\bZ\oplus\bZ_6$ and $\cA_B\cong
\bZ\oplus\bZ\oplus\bZ_3$. This example is due to Cavenagh and
Wanless~\cite[Example~5.1]{CavenaghWanless}. Thus
$\cA_W\not\cong\cA_B$ in this example, and so
Theorem~\ref{thm:isomorphism} does not hold in general for vertex
$3$-colourable surfaces of higher genus.

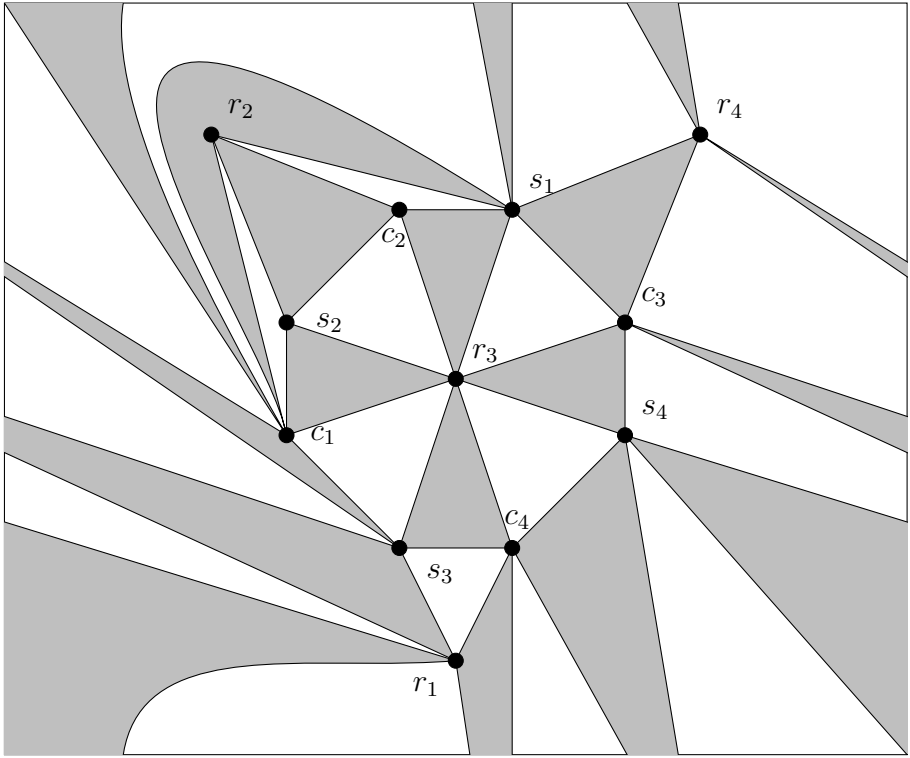
\begin{figure}
\begin{center}
\begin{tikzpicture}[fill=gray!50, scale=0.5,
vertex/.style={circle,inner sep=2,fill=black,draw}]

\coordinate (v1) at (0,-7.5);
\coordinate (v2) at (-6.5,6.5);
\coordinate (v3) at (0,0);
\coordinate (v4) at (6.5,6.5);

\coordinate (va) at (-4.5,-1.5);
\coordinate (vb) at (-1.5,4.5);
\coordinate (vc) at (4.5,1.5);
\coordinate (vd) at (1.5,-4.5);

\coordinate (ve) at (1.5,4.5);
\coordinate (vf) at (-4.5,1.5);
\coordinate (vg) at (-1.5,-4.5);
\coordinate (vh) at (4.5,-1.5);

\coordinate (v1_up) at (0,12.5);
\coordinate (v1_right) at (24,-7.5);
\coordinate (v4_down) at (6.5,-13.5);
\coordinate (v4_left) at (-17.5,6.5);
\coordinate (va_right) at (19.5,-1.5);
\coordinate (va_down) at (-4.5,-21.5);
\coordinate (vc_left) at (-19.5,1.5);
\coordinate (vd_up) at (1.5,15.5);
\coordinate (vg_right) at (22.5,-4.5);
\coordinate (vh_left) at (-19.5,-1.5);
\coordinate (vh_up) at (4.5,18.5);
\coordinate (ve_down) at (1.5,-17.5);

\clip [draw] (-12,-10) rectangle (12,10);

\filldraw (v3) -- (vh) -- (vc) -- cycle;
\filldraw (v3) -- (ve) -- (vb) -- cycle;
\filldraw (v3) -- (vf) -- (va) -- cycle;
\filldraw (v3) -- (vd) -- (vg) -- cycle;
\filldraw (vf) -- (vb) -- (v2) -- cycle;
\filldraw (vc) -- (ve) -- (v4) -- cycle;

\filldraw (v2) -- (va) .. controls +(-0.5,3) and (-15,15) .. (ve) -- cycle;

\filldraw (v1) -- (vd) -- (ve_down) -- cycle;
\filldraw (v1_up) -- (vd_up) -- (ve) -- cycle;

\filldraw (vd) -- (vh) -- (v4_down) -- cycle;
\filldraw (vd_up) -- (vh_up) -- (v4) -- cycle;

\filldraw (v4_left) -- (va) -- (vg) -- cycle;
\filldraw (v4) -- (va_right) -- (vg_right) -- cycle;

\filldraw (vg) -- (v1) -- (vc_left) -- cycle;
\filldraw (vg_right) -- (v1_right) -- (vc) -- cycle;

\filldraw (vh) -- (v1_right) -- (12,-10) -- cycle;

\filldraw (va) .. controls (-14,15) and (-6,12) .. (v1_up) -- ++(0,10)
-- (-12,10) -- cycle;
\filldraw (va_down) .. controls (-14,-5) and (-6,-8) .. (v1) --
(vh_left) -- (va_down) -- cycle;

\node at (v3) [vertex,label=north east:$r_3$]{};
\node at (ve) [vertex,label=north east:$s_1\,$]{};
\node at (vb) [vertex,label=south:$c_2\,\,$]{};
\node at (vc) [vertex,label=north east:$c_3$]{};
\node at (vh) [vertex,label=north east:$s_4$]{};
\node at (vd) [vertex,label=north:$\,\,c_4$]{};
\node at (vg) [vertex,label=south east:$\,\,s_3$]{};
\node at (va) [vertex,label=east:$\,c_1$]{};
\node at (vf) [vertex,label=east:$\,\,s_2$]{};

\node at (v4)  [vertex,label=north east:$r_4$]{};
\node at (v2) [vertex,label=north east:$r_2$]{};
\node at (v1)  [vertex,label=south west:$r_1$]{};

\end{tikzpicture}
\end{center}
\caption{A face 2-coloured triangulation of the torus with $\cA_W\cong
  \bZ\oplus\bZ\oplus\bZ_6$ and $\cA_B\cong \bZ\oplus \bZ\oplus \bZ_3$.}
\label{fig:non_isomorphism_example}
\end{figure}

Nevertheless, the techniques of our paper do provide a weaker result
in this more general setting. Let $\cG$ be a vertex $3$-coloured
triangulation of some (orientable) surface, with faces $2$-coloured
black and white. Let $R$, $C$ and $S$ be the colour
classes of the vertex $3$-colouring. Define the groups $\cA_W$,
$\cA_B$, $\cB_W$ and $\cB_B$ as before. Our analysis of the structure
of $\cB_W$ and $\cB_B$ does not depend on the genus of the underlying
surface (and so in particular, $\cB_W\cong \cB_B\cong \bZ\oplus \cC$,
where $\cC$ is finite). Indeed, the only time we used the fact that
the underlying surface is a sphere was in
Lemma~\ref{lem:cycle_lemma}. For a
graph $\Gamma$ embedded in an orientable surface of arbitrary genus,
the statement of this lemma is still true if we replace the final isomorphism
by the statement that $\langle \tilde{E}\rangle$ is a quotient of the group
\[
F(E)/
  \left\langle\sum_{a=1}^{\ell_i}(-1)^ae_{ia}:i\in\{1,2,\ldots,k\}\right\rangle.
\]
Thus we may argue, following the proofs of
Theorems~\ref{thm:isomorphism} and \ref{thm:group_order}, that the
following theorem is true.
\begin{theorem}
\label{thm:highergenus}
Let $\cG$ be a face $2$-coloured vertex $3$-coloured triangulation of
an orientable surface. Then $\cB_W\cong \cB_B$. Moreover,
$\cA_W\cong\bZ\oplus \bZ\oplus M$ and $\cA_B\cong\bZ\oplus \bZ \oplus N$,
where $M$ and $N$ are quotients of the (finite) torsion subgroup
$\cC_W$ of $\cB_W$.
\end{theorem}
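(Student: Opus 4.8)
The plan is to prove the three assertions in turn, re-running the arguments of Section~\ref{sec:proof_of_theorem} and Theorem~\ref{thm:group_order} while isolating the single place where the sphere was genuinely used. First, for $\cB_W\cong\cB_B$, I would observe that the entire derivation of $T'=T^{\mathsf T}$ in Section~\ref{sec:proof_of_theorem} is local: the identities $t'_{ij}=t_{ji}$ for $i\neq j$, the equality $d_i=d'_i$ (which rests only on the colours alternating as one rotates about $r_i$, valid on any orientable surface), and hence $t_{ii}=t'_{ii}$, never refer to the genus. Since the Smith Normal Form of an integer matrix equals that of its transpose, $\cB_W\cong\cB_B$ follows immediately.

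Second, I would establish that $\cC_W$, the kernel of the augmentation $\phi$ on $\cB_W$, is finite; this amounts to $\det\overline T\neq0$, equivalently to the tree number of $D$ being positive, equivalently to $D$ being strongly connected. Here the border-cycle argument used for the sphere in Theorem~\ref{thm:group_order} does not transfer directly, so I would replace it by the standard fact that a balanced (Eulerian) weakly connected digraph is strongly connected: $D$ is balanced because the rows and columns of $T$ sum to zero, and it is weakly connected because the surface is connected and any two geometrically adjacent regions $\cR_i,\cR_{i'}$ are joined by an edge of $D$ (the shared border edge lies between a white and a black triangle). Thus $\cB_W\cong\bZ\oplus\cC_W$ with $\cC_W$ finite on any connected orientable surface.

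Third, for the structure of $\cA_W$ I would follow the proof of Lemma~\ref{lem:key_lemma} almost verbatim, noting that its only genus-sensitive ingredient is the final isomorphism of Lemma~\ref{lem:cycle_lemma}. The splitting $F(C\cup S)=\langle u\rangle\oplus\langle\tilde E\rangle$ uses only connectivity and bipartiteness of $\Gamma$, so $\cA_W\cong\bZ\oplus(\langle\tilde E\rangle/\langle U\rangle)$ still holds, and the elimination turning the rotation relations into the rows of $T$ is purely local, so $\cB_W\cong F(E)/\langle U'\cup\{\text{rotations}\}\rangle$ as before. The one change is that, while $\ker\pi$ is always the cycle space $Z_1$ of $\Gamma$ and the rotations span the boundary subgroup $B_1$, one has $B_1=Z_1$ only on the sphere; in general $Z_1/B_1$ is the (nonzero) first homology of the surface. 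Hence $\langle\tilde E\rangle=F(E)/\ker\pi$ is a proper quotient of $F(E)/\langle\{\text{rotations}\}\rangle$, and passing to quotients by $U$ and $U'$ shows $\langle\tilde E\rangle/\langle U\rangle$ is a quotient of $\cB_W$. Therefore $\cA_W\cong\bZ\oplus M'$ with $M'$ a quotient of $\cB_W\cong\bZ\oplus\cC_W$, and symmetrically for $\cA_B$.

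It remains to pin down the free rank and the torsion, which I expect to be the main obstacle, since here one must argue rather than merely copy an earlier proof. The genus-independent map $\nu\colon\cA_W\to\bZ\oplus\bZ$ is surjective, so $\cA_W$ has free rank at least $2$; combined with $\cA_W\cong\bZ\oplus M'$ and the fact that $M'$, being a quotient of $\bZ\oplus\cC_W$ with $\cC_W$ finite, has free rank at most $1$, the free rank is exactly $2$ and $M'$ has free rank exactly $1$. Writing $q\colon\bZ\oplus\cC_W\twoheadrightarrow M'$, the quotient $M'/q(\cC_W)$ is a quotient of $(\bZ\oplus\cC_W)/\cC_W\cong\bZ$ that surjects onto the torsion-free group $M'/\mathrm{Tors}(M')\cong\bZ$, hence is itself $\bZ$; this forces $q(\cC_W)=\mathrm{Tors}(M')$, so $M:=\mathrm{Tors}(M')$ is a quotient of $\cC_W$ and $M'\cong\bZ\oplus M$. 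Thus $\cA_W\cong\bZ\oplus\bZ\oplus M$, and the identical argument applied to $\cB_B$ (whose torsion is $\cong\cC_W$ by the first step) gives $\cA_B\cong\bZ\oplus\bZ\oplus N$ with $N$ a quotient of $\cC_W$, completing the proof.
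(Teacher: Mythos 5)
Your proposal is correct and takes essentially the same approach as the paper: the paper's entire proof of Theorem~\ref{thm:highergenus} is the observation that Lemma~\ref{lem:key_lemma} and Theorem~\ref{thm:group_order} are genus-independent once the final isomorphism of Lemma~\ref{lem:cycle_lemma} is weakened to the statement that $\langle\tilde{E}\rangle$ is a quotient of $F(E)/\left\langle\sum_{a=1}^{\ell_i}(-1)^ae_{ia}\right\rangle$, which is exactly your $B_1\subseteq Z_1$ (with $Z_1/B_1\cong H_1$) step. The only divergences are that you fill in details the paper leaves implicit (the $\nu$-based bookkeeping pinning the free rank at $2$ and identifying the torsion of $M'$ with a quotient of $\cC_W$), and that you substitute the standard Eulerian-plus-weakly-connected fact for the paper's border-cycle proof that $D$ is strongly connected --- a valid alternative, though contrary to your worry the paper's original argument does transfer to higher genus, since it is local: the border of a proper nonempty union of regions on any connected surface still contains a cycle in $\Gamma$ along which the inside triangle colours alternate.
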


\subsection{Some open problems}

How large can the torsion subgroup $\cC_W$  of $\cA_W$ be? More precisely:

\begin{question}
\label{qn:order}
Let $m_t$ be the maximal order of $\cC_W$, over all $2$-face colourable
triangulations of the sphere with $t$ triangles of each colour. What
is the value of $\limsup_{t\rightarrow\infty} (m_t)^{1/t}$?
\end{question}

Theorem~\ref{thm:exponential} and~\cite[Corollary~5]{CavenaghWanless}
combine to show that this limit lies strictly between $1.189$ and
$1.818$. In fact, the lower bound can be improved to $1.201$ by using
twelve rather than eight triangles in each region $\cR_i$ (see the final
paragraph of the previous section, with $w=3$). Question~\ref{qn:order}
asks whether this value can be determined precisely.

Both partial latin squares in a spherical latin bitrade $(W,B)$ embed
in the finite group $\cC_W$, but there is no reason to suppose that
$\cC_W$ is the group of minimal order with this property (though a
minimal group must be isomorphic to a quotient of $\cC_W$, by
Theorem~\ref{thm:small_universe}). Moreover, we believe that there
should be examples of spherical latin bitrades $(W,B)$ such that
$W$ and $B$ both embed in some abelian group of rank smaller than
$\cC_W$. Example~5.2 in Cavenagh and Wanless~\cite{CavenaghWanless} is
a spherical latin bitrade $(W,B)$ with $\cC_W\cong \bZ_2\oplus
\bZ_6$ (so has rank $2$), where $W$ embeds in $\bZ_6$ (of rank
$1$). But the smallest abelian group in which $B$ embeds in this
example is $\cC_W$ itself.

\begin{question}
\label{qn:bitrade_order}
Is there a family of bitrades with the property that the minimum order
of an abelian group in which both mates embed is exponential in the
size of the bitrade?
\end{question}
\begin{question}
\label{qn:bitrade_rank}
Is there a family of bitrades where the
minimal rank of an abelian group in which both mates embed is linear
in the size of the bitrade? 
\end{question}

It is clear that the order of an abelian group in which both mates
embed is at least linear in the size of the
bitrade;~\cite[Corollary~7]{CavenaghWanless} shows that the minimal
rank of an abelian group in which both mates embed can be at least
logarithmic in the size of the bitrade.

\end{document}